\newtheorem{theorem}{Theorem}[section]
\newtheorem{corollary}[theorem]{Corollary}
\newtheorem{lemma}[theorem]{Lemma}
\newtheorem{proposition}[theorem]{Proposition}
\theoremstyle{definition}
\newtheorem{definition}[theorem]{Definition}
\newtheorem{example}[theorem]{Example}
\newcommand{\ep}{\varepsilon}
\newcommand{\RR}{\mathbb{R}}
\newcommand{\ZZ}{\mathbb{Z}}
\newcommand{\ZZO}{\mathbb{Z}\backslash\{0\}}
\newcommand{\sC}{\mathscr{C}}
\newcommand{\sg}{\mathscr{G}}
\title[Minimality and Gluing Orbit Property]
      {Minimality and Gluing Orbit Property}
\author[Peng Sun]{}
\subjclass[2010]{Primary: 37B05,37B40, 37C50.
        Secondary: 37B20}
 \keywords{minimality, 
 gluing orbit property, topological entropy, equicontinuity, topological
 transitivity.  }
 \email{sunpeng@cufe.edu.cn}
\begin{document}

\maketitle\ 

\centerline{\scshape Peng Sun}
\medskip
{\footnotesize
 \centerline{China Economics and Management Academy}
   \centerline{Central University of Finance and Economics}
   \centerline{Beijing 100081, China}
} 

\bigskip

\begin{abstract}
We show that a  dynamical system with gluing orbit property is either minimal or have positive topological entropy.
Moreover, for equicontinuous systems, we show that topological transitivity,
minimality and orbit gluing property are equivalent. These facts reflect the similarity and dissimilarity of gluing orbit property with 
specification like properties.
\end{abstract}


\section{Introduction}

The notion of gluing orbit property was introduced in \cite{ST}, \cite{CT}
and \cite{BV}.
As a weaker form of the well-studied specification properties, it turns out to be a more general property which still captures crucial topological features
of the systems, especially the non-hyperbolic ones. A number of results have
been obtained based on this property. See also \cite{BTV},
\cite{CLT}, \cite{TWW} and \cite{XZ}. For classical results with specification
property and specification like properties, the readers are referred to \cite{DGS}
and \cite{KLO}.

There is a remarkable difference between gluing orbit property and specification
property, as well as most weaker forms of the latter. As illustrated
in \cite{BTV} and \cite{BV}, certain examples far from specification, 
such as irrational rotations,
have gluing orbit property. We can see that gluing orbit property only requires topological
transitivity, and
is compatible with zero topological entropy,
while specification property implies topological mixing and positive topological
entropy. In general, topological mixing should not be expected for a system
that only has
gluing orbit property. For example, the direct product of the irrational
rotation and any system with specification property has  gluing orbit
property and is not topologically mixing.
In this article, we consider the entropy and  find that there is a dichotomy:
a system with gluing orbit property is either minimal or of positive topological
entropy. 

\begin{theorem}\label{posent}
Assume that $(X,f)$ is not minimal and has the gluing orbit property, then
it has positive topological entropy.
\end{theorem}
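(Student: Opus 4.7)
Since $(X, f)$ is not minimal, Zorn's lemma provides a minimal closed invariant subset $Y \subsetneq X$. Pick $y \in Y$ and $x \in X \setminus Y$, set $\delta_0 := d(x, y) > 0$, fix $\epsilon \in (0, \delta_0/4)$, and let $M = M(\epsilon)$ be the constant from the gluing orbit property. For each integer $k$ and each bit string $\omega = (\omega_1, \ldots, \omega_k) \in \{0, 1\}^k$, I would apply the gluing orbit property to the orbit segments $(x_{\omega_1}, 1), \ldots, (x_{\omega_k}, 1)$ (with $x_0 := x$ and $x_1 := y$), obtaining a point $z_\omega \in X$ and gap lengths $m_j(\omega) \in \{0, 1, \ldots, M\}$ with $d(f^{s_j(\omega)}(z_\omega), x_{\omega_j}) < \epsilon$, where $s_j(\omega) := (j-1) + \sum_{i < j} m_i(\omega) \leq (k-1)(M+1)$.

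Since there are at most $(M+1)^{k-1}$ possible gap vectors, the pigeonhole principle produces a subfamily $\Omega \subseteq \{0, 1\}^k$ with $|\Omega| \geq 2^k / (M+1)^{k-1}$ all sharing the same gap vector, and hence the same visit times $s_j$. For distinct $\omega \neq \omega' \in \Omega$ and the first position $j^*$ where they differ, the images $f^{s_{j^*}}(z_\omega)$ and $f^{s_{j^*}}(z_{\omega'})$ lie in disjoint $\epsilon$-balls about $x$ and $y$ and so are at distance at least $\delta_0/2$. This exhibits an $((k-1)(M+1)+1, \delta_0/2)$-separated subset of $X$ of cardinality $\geq 2^k/(M+1)^{k-1}$, from which
\[
h_{\mathrm{top}}(f) \;\geq\; \frac{\log 2 - \log(M+1)}{M+1}.
\]

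The main obstacle is that this naive bound is positive only in the degenerate case $M = 0$; for $M \geq 1$ the pigeonhole loss $(M+1)^{k-1}$ swamps the $2^k$ growth. To obtain a genuinely positive bound I would enlarge the coding alphabet. Transitivity (implied by the gluing orbit property) and non-minimality together force $X$ to be infinite, since a finite transitive system is a single periodic orbit and hence minimal. For $\epsilon$ small enough, $X$ therefore admits $L > M(\epsilon) + 1$ pairwise $3\epsilon$-separated points $p_1, \ldots, p_L$ (which one can take to include a point of $Y$ and several points in $X \setminus Y$ at controlled distance from $Y$). Repeating the construction with $\omega \in \{1, \ldots, L\}^k$ produces a separated family of size $\geq L^k/(M+1)^{k-1}$ and the strictly positive lower bound $(\log L - \log(M+1))/(M+1)$. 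The technical crux --- showing that $\epsilon$ can be chosen so that the packing number $s_X(3\epsilon)$ exceeds $M(\epsilon) + 1$, i.e.\ a quantitative comparison between the metric complexity of $X$ and the gluing constant --- is precisely where non-minimality is used essentially, since in a minimal equicontinuous system (like the irrational rotation) these two quantities are of the same order and no such $\epsilon$ exists, consistent with zero entropy.
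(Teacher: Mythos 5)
You have correctly identified the central obstacle: with a two-letter alphabet and a pigeonhole over gap vectors, the loss $(M+1)^{k-1}$ destroys the $2^k$ growth. But your proposed repair does not close the gap. Everything hinges on the ``technical crux'' that for some $\ep$ the packing number $s(1,3\ep)$ exceeds $M(\ep)+1$, and you give no argument for this beyond asserting that non-minimality is ``where it is used.'' There is no such quantitative control available: the gluing orbit property only provides \emph{some} function $M(\ep)$, which may grow arbitrarily fast as $\ep\to 0$ compared with the metric complexity of $X$, and non-minimality gives no lower bound on packing numbers relative to $M(\ep)$. Since the theorem must hold for every admissible $M(\ep)$, a bound of the form $(\log L-\log(M+1))/(M+1)$ that is positive only when $L>M+1$ cannot prove it. This is a genuine gap, not a routine technicality.

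The paper's proof resolves the same obstacle by a different mechanism: it never pigeonholes over gap vectors at all. Using non-minimality it first produces a non-recurrent point (Lemma \ref{nonrec}) and then two points $x,y$ whose forward orbits stay uniformly $\ep$-away from $x$, from $y$, and from each other for all positive times (Lemma \ref{stayaway}). Gluing blocks $(x_k,m)$ with $x_k\in\{x,y\}$, one then checks separation for \emph{every} pair $\xi\ne\xi'$: if the gap vectors agree, separation comes from $d(x,y)$ as in your argument; if they first differ at step $k$ by $l\in\{1,\dots,m-1\}$, the two shadowing orbits are respectively close to $x_k(\xi)$ and to $f^l(x_k(\xi'))$, and Lemma \ref{stayaway} guarantees these are $\ep$-apart. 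Thus all $2^n$ shadowing points are $(2mn,\ep_2)$-separated with no pigeonhole loss, giving $h(f)\ge \ln 2/(2m)>0$. The missing idea in your proposal is precisely this ``mutual non-recurrence'' of the glued points, which is what makes differing gap vectors harmless; without it (or some substitute), your scheme cannot yield a positive bound.
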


We remark that Theorem \ref{posent} is not trivial as 
there are plenty of systems with zero topological entropy
that are not minimal.
Besides those simple examples, 
there are also complicated ones (cf. Example \ref{exintmap}). We are not
clear whether there exists a system with gluing orbit property that
is both minimal and of positive topological entropy. We suspect that the
answer is positive and Herman's example \cite{Herman} 
(or something like it) 
may be a possible candidate.

As a direct corollary of Theorem \ref{posent}, 
periodic gluing orbit property implies positive
topological entropy, just as the specification properties do. The only exception
that should be ruled out
is the trivial case that $X$ consists of a single periodic orbit.

\begin{corollary}
A non-trivial system with the periodic
gluing orbit property must have positive topological entropy and exponential
growth of periodic orbits.
\end{corollary}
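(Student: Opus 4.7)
The plan is in two stages: first derive positive entropy directly from Theorem~\ref{posent}, and then leverage the periodic gluing property together with a separated set at a small scale to manufacture exponentially many distinct periodic orbits.

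For the first stage I need to rule out minimality so that Theorem~\ref{posent} applies. The periodic gluing orbit property in particular furnishes at least one periodic point $p$, of some period $k$. If $(X,f)$ were minimal, then
\[
X=\overline{\{f^n(p):n\ge 0\}}=\{p,f(p),\dots,f^{k-1}(p)\}
\]
would be a single periodic orbit, contradicting the non-triviality hypothesis. Thus $(X,f)$ is not minimal and Theorem~\ref{posent} yields $h:=h_{\text{top}}(f)>0$.

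For the second stage, pick $\ep>0$ small enough that the maximal $(n,\ep)$-separated cardinality $s(n,\ep)$ still satisfies $\limsup_n \frac{1}{n}\log s(n,\ep)\ge h>0$, and let $M=M(\ep/2)$ denote the gluing bound associated with tolerance $\ep/2$. For each $x$ in an $(n,\ep)$-separated set $E_n$, apply the periodic gluing orbit property to the single orbit segment $(x,n)$ to obtain a periodic point $p_x$ of period $N_x\le n+M$ that $(\ep/2)$-shadows $x,f(x),\dots,f^{n-1}(x)$. Two distinct members $x,y\in E_n$ differ by more than $\ep$ at some coordinate $0\le j<n$, so by the triangle inequality $p_x\ne p_y$. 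Consequently the number of points fixed by some iterate $f^i$ with $i\le n+M$ is at least $s(n,\ep)$, which gives exponential growth of periodic orbits with rate at least $h$.

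The only genuine obstacle is bookkeeping around the period: one must invoke the periodic gluing orbit property in the form that bounds the total period of the returning orbit by $n+M$, with $M$ depending only on $\ep$ and not on $n$. Once that is pinned down, the rest reduces to the triangle inequality for the $\ep/2$ shadowing, a standard entropy-via-separated-sets estimate, and a direct appeal to Theorem~\ref{posent}.
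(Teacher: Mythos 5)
Your proof is correct and follows essentially the same route as the paper: rule out minimality by noting that a minimal system with a periodic point is a single periodic orbit, apply Theorem~\ref{posent} for positive entropy, and then glue each point of an $(n,\ep)$-separated set to a periodic point of period at most $n+M(\ep/2)$ to get $p_{n+M}(f)\ge s(n,\ep)$, which is exactly the paper's proof of Theorem~\ref{entper}. The only cosmetic difference is the standard bookkeeping with $h<h(f)$ versus your ``rate at least $h$'' phrasing, which is harmless.
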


By Theorem \ref{posent}, for a system with zero topological
entropy, gluing orbit property implies minimality. Example \ref{exaper} shows that
the converse is not true. 
We can show that the converse holds 
if the the system is equicontinuous, in which case
gluing orbit property is also equivalent to
topological transitivity. This extends the examples in \cite{BTV}. 
We doubt if there are 
systems that are not equicontinuous,
of zero topological entropy and have gluing orbit property (hence minimal).

\begin{theorem}\label{equieq}
Assume that $(X,f)$ is equicontinuous. Then the followings
are equivalent:
\begin{enumerate}
\item $(X,f)$ is topologically transitive.
\item $(X,f)$ is minimal.
\item $(X,f)$ has gluing orbit property.
\end{enumerate}
\end{theorem}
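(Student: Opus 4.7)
The plan is to close the cycle $(3)\Rightarrow(1)\Rightarrow(2)\Rightarrow(3)$.

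The implication $(3)\Rightarrow(1)$ is essentially immediate from the definition of the gluing orbit property. Given nonempty open sets $U,V\subset X$, fix $u\in U$, $v\in V$ and $\ep>0$ with $B(u,\ep)\subset U$ and $B(v,\ep)\subset V$. Apply the gluing orbit property to the two singleton orbit segments $(u,1),(v,1)$: the resulting $y$ and gap $m_1\le M(\ep)$ satisfy $y\in U$ and $f^{1+m_1}y\in V$, so $f^{1+m_1}(U)\cap V\neq\emptyset$.

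For $(1)\Rightarrow(2)$ I would use the classical trick of replacing the original metric $d$ by $D(x,y):=\sup_{n\ge 0}d(f^n x,f^n y)$. Equicontinuity guarantees that $D$ is topologically equivalent to $d$, and by construction $f$ is $D$-non-expansive. Given a point $x$ with dense forward orbit and any $y,z\in X$ and $\ep>0$, pick $n$ with $D(f^n x,y)<\ep$ and then $m>n$ with $D(f^m x,z)<\ep$; non-expansiveness then yields $D(f^{m-n}y,z)\le D(y,f^n x)+D(f^m x,z)<2\ep$, so every forward orbit is dense and the system is minimal.

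The substantive implication is $(2)\Rightarrow(3)$. The key intermediate fact is a \emph{uniform recurrence} lemma: for minimal $(X,f)$ on a compact metric space and any $\eta>0$ there exists $M=M(\eta)$ with $\{z,fz,\ldots,f^M z\}$ being $\eta$-dense in $X$ for every $z\in X$. I would prove this by covering $X$ with finitely many $\eta$-balls $B_1,\ldots,B_r$ and observing that each first-hitting time $T_j(z):=\min\{n\ge 0:f^n z\in B_j\}$ is upper semicontinuous — since $\{z:T_j(z)\le n\}=\bigcup_{k\le n}f^{-k}B_j$ is open — and therefore bounded on compact $X$; take $M$ to be the maximum of these bounds. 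Granted this, given $\ep>0$ and orbit segments $(x_1,n_1),\ldots,(x_k,n_k)$, choose $\delta<\ep$ by equicontinuity so that $d(p,q)<\delta$ forces $d(f^n p,f^n q)<\ep$ for all $n\ge 0$, set $M:=M(\delta)$, and build the shadowing point starting with $y:=x_1$. Writing $Q_1:=0$ and $Q_{i+1}:=Q_i+n_i+m_i$, I pick $m_i\in\{0,1,\ldots,M\}$ inductively so that $d(f^{Q_{i+1}}y,x_{i+1})<\delta$; the uniform recurrence lemma applied to the point $f^{Q_i+n_i}y$ supplies such an $m_i$. Equicontinuity then propagates this to $d(f^{Q_{i+1}+j}y,f^j x_{i+1})<\ep$ for all $0\le j<n_{i+1}$, while segment $1$ is shadowed exactly because $y=x_1$.

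The only real obstacle is the uniform recurrence lemma, which is where compactness and minimality enter essentially; the rest of $(2)\Rightarrow(3)$ is bookkeeping guided by equicontinuity. If the ambient definition of the gluing orbit property demands strictly positive gaps $m_i\ge 1$, one argues identically with $\{fz,\ldots,f^Mz\}$ in place of $\{z,\ldots,f^Mz\}$.
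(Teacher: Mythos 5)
Your overall architecture coincides with the paper's: the cycle $(3)\Rightarrow(1)\Rightarrow(2)\Rightarrow(3)$, with the whole weight carried by a uniform recurrence lemma (the paper's Lemma \ref{lemmacover}: for minimal systems there is $N=N(\ep)$ with $\bigcup_{n=0}^{N}f^{-n}(B(x,\ep))=X$ for every $x$) and by the observation that the shadowing point can be taken to be $x_1$ itself, with equicontinuity propagating $\delta$-closeness at the start of each segment to $\ep$-closeness along the whole segment (Proposition \ref{minigo}). Within that shared skeleton you differ in two sub-proofs. For the uniform recurrence lemma your argument (fix a finite cover by small balls, note each first-hitting time $T_j$ is upper semicontinuous because $\{T_j\le n\}=\bigcup_{k\le n}f^{-k}B_j$ is open, hence bounded on the compact $X$) is cleaner than the paper's, which fixes the target ball's center $x$, runs a separate semicontinuity argument for a function $r(y)$, and then takes a finite subcover in the $x$-variable to get uniformity over centers; your version gets that uniformity for free from the fixed cover. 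For $(1)\Rightarrow(2)$ the paper argues directly: given $x,y,\ep$, transitivity produces $z_0\in B(x,\delta)\cap f^{-n}(B(y,\ep/2))$ and equicontinuity gives $d(f^n(x),y)<\ep$; no transitive point is needed.

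Two small points you should tighten. First, your $(1)\Rightarrow(2)$ via the adapted metric $D(x,y)=\sup_{n\ge0}d(f^nx,f^ny)$ presupposes a point with a dense \emph{forward} orbit; the paper's definition of transitivity is the open-set one with $n\in\ZZ$, so extracting such a point requires a Baire-category argument plus an extra step to pass from a dense two-sided orbit to a dense forward orbit, and your inequality $D(f^{m-n}y,z)\le D(y,f^nx)+D(f^mx,z)$ needs $m\ge n$, which is not automatic if the approximating times come from a two-sided orbit. The paper's direct argument sidesteps all of this. Second, your uniform recurrence lemma needs every $T_j(z)$ to be finite, i.e.\ every \emph{forward} orbit to be dense, whereas the paper defines minimality via full orbits $\{f^n(x):n\in\ZZ\}$; for a homeomorphism of a compact space forward minimality does follow (the $\omega$-limit set is a nonempty closed invariant set, hence all of $X$), but this deserves a sentence — the paper instead works two-sidedly and shifts by $f^{-N_x'}$ to land in $\{0,\dots,N_x\}$. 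Neither issue is fatal, but both need to be addressed for the proof to be complete under the paper's definitions.
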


Our results hold for both invertible and non-invertible cases, and both discrete-time
and continuous-time
cases as well. In this article we mainly work with homeomorphisms. There
are some extra technical difficulties
in the proof of the non-invertible and continuous-time cases. We give
a proof of Theorem
\ref{posent} in the semiflow case in Section 7 to illustrate the difference.

Some preliminaries are introduced in Section 2, including definitions and
notations we shall use. We prove Theorem \ref{posent} in Section 3 and discuss
some corollaries in Section 4.
Theorem \ref{equieq} is proved in Section 5. 
Some examples are investigated in Section 6.

\section{Preliminaries}
Let $(X,d)$ be a compact metric space. Let $f:X\to X$ be a homeomorphism
on $X$. Conventionally, $(X,f)$ is called a \emph{topological dynamical system} or just a system.

\begin{definition}
        $(X,f)$ is said to be \emph{equicontinuous} if for every $\ep>0$, there is $\delta>0$ such that for any $x,y\in X$ with $d(x,y)<\delta$, we have
        $$d(f^n(x),f^n(y))<\ep\text{ for every $n\ge 0$}.$$
\end{definition}

\begin{definition}
$(X,f)$ is said to be \emph{topologically transitive} if for any open sets $U,V$
in $X$, there is $n\in\ZZ$ such that
$$U\cap f^{-n}(V)\ne\emptyset.$$
\end{definition}

\begin{definition}
$(X,f)$ is said to be \emph{minimal} if every orbit is dense, i.e. for every $x\in
X$,
$$\overline{\{f^n(x):n\in\ZZ\}}=X.$$
\end{definition}

\begin{definition}
For $n\in\ZZ^+$ and $\ep>0$, a subset $E\subset X$ 
is called an \emph{$(n,\ep)$-separated} if
for any distinct points $x,y$ in $E$, there is $k\in\{0,\cdots,n-1\}$ such
that
$$d(f^k(x),f^k(y))>\ep.$$
Denote by $s(n,\ep)$ the maximal cardinality of $(n,\ep)$-separated subsets
of $X$. Then the \emph{topological entropy} of $f$ is defined as
$$h(f):=\lim_{\ep\to0}\limsup_{n\to\infty}\frac{\ln s(n,\ep)}{n}.$$
\end{definition}

\begin{definition}\label{gapshadow}
 We call the finite sequence of ordered pairs
$$\sC=\{(x_j,m_j)\in X\times\ZZ^+: 
j=1,\cdots, k\}$$
an \emph{orbit sequence of rank $k$}. A \emph{gap} for an orbit sequence
of rank $k$ is a $(k-1)$-tuple
$$\sg=\{t_j\in\ZZ^+: j=1,\cdots, k-1\}.$$
For $\ep>0$, we say that $(\sC,\sg)$ can be \emph{$\ep$-shadowed} by $z\in
X$ if
for every $j=1,\cdots,k$,
$$(f^{s_{j}+l}(z), f^l(x_j))<\ep\text{ for every }l=0,1,\cdots, m_j-1,$$
where
$$s_1=0\text{ and }s_j=\sum_{i=1}^{j-1}(m_i+t_i-1)\text{ for }j=2,\cdots,k.$$
\end{definition}

\begin{definition}\label{spec}
$(X,f)$ is said to have  specification property if for every $\ep>0$,
there is $M(\ep)>0$ such that any $(\sC,\sg)$ with $\min\sg\ge M(\ep)$ can be $\ep$-shadowed.
\end{definition}

\begin{definition}
$(X,f)$ is said to have  periodic 
specification property if for every $\ep>0$,
there is $M(\ep)>0$ such that for any $t\ge M(\ep)$,
 any $(\sC,\sg)$ with $\min\sg\ge M(\ep)$ can be 
 $\ep$-shadowed by a periodic point of the period $s_k+t$.
\end{definition}

\begin{definition}\label{defgo}
$(X,f)$ is said to  have \emph{gluing orbit property}
 if for every $\ep>0$ there
is $M(\ep)>0$ such that for any orbit sequence 
$\sC$, there is a gap $\sg$ 
such that $\max\sg\le M(\ep)$ and $(\sC,\sg)$  can be $\ep$-shadowed.
\end{definition}

\begin{definition}\label{perglu}
$(X,f)$ is said to have \emph{periodic gluing orbit property} 
if for every $\ep>0$,
there is $M(\ep)>0$ such that 
for any orbit sequence $\sC$, there are $t\le M(\ep)$ and
a gap $\sg$ with $\max\sg\le M(\ep)$ such that $(\sC,\sg)$ can be $\ep$-shadowed
by a periodic point of the period $s_k+m_k+t$.
\end{definition}

The notion of specification property was first introduced by Bowen in \cite{Bowen}.
It has a number of variations and their names also vary in different literatures.
An overview of these specification like properties can be found in \cite{KLO}.
Gluing orbit property first appeared in \cite{ST}, where it is called transitive
specification. It is called weak specification in \cite{CT} in a slightly
generalized form. It is in \cite{BV} that the name gluing orbit is called
to indicate its dissimilarity with specification like properties.

Here we attempt to reformulate the definitions of specification and gluing
orbit properties to make our argument more
clear and more convenient. We follow the names called in 
\cite{BV}, \cite{KLO} and \cite{TWW}. Note that in our definitions of
periodic specification and periodic gluing orbit properties the
gap $\sg$ may be $\emptyset$. 

Definition \ref{gapshadow} naturally extends to infinite orbit sequences.
Definition \ref{spec} and \ref{defgo} are conventional definitions speaking
of finite
orbit sequences. However, they are equivalent to the definitions speaking
of infinite ones. This is clear for specification. For gluing orbit property
a little extra work should be done.
The flow version of the following lemma is contained in \cite{CLT}. A similar
technique is also part of the proof of Theorem \ref{posent}. 

\begin{lemma}\label{shadowinf}
$(X,f)$ has gluing orbit property if and only if for every $\ep>0$, there
is $L(\ep)>0$ such that for any infinite orbit
sequence $\sC=\{(x_j,m_j):j\in\ZZ\}$, there is $\sg$ with $\max\sg\le L(\ep)$,
$(\sC,\sg)$ can be $\ep$-shadowed. Moreover, we can take $L(\ep)\le M(\ep')$
for any $\ep'<\ep$.
\end{lemma}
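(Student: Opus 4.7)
The forward direction is immediate: given a finite orbit sequence $\sC$ of rank $k$, extend it to an infinite one $\tilde\sC$ by inserting copies of $(x_1,1)$ at all unused indices, apply the infinite statement at scale $\ep$, and restrict the resulting gap and shadowing point to the original block of indices. This yields an $\ep$-shadowing of $\sC$ with maximum gap $\le L(\ep)$, so $M(\ep)\le L(\ep)$ suffices in Definition \ref{defgo}.

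For the reverse direction together with the quantitative estimate $L(\ep)\le M(\ep')$, I would fix $\ep>0$, pick $\ep'\in(0,\ep)$, and set $M:=M(\ep')$ from Definition \ref{defgo}. Given $\sC=\{(x_j,m_j):j\in\ZZ\}$, approximate it by the two-sided truncations $\sC_N:=\{(x_j,m_j):-N\le j\le N\}$. For each $N$, Definition \ref{defgo} supplies a gap $\sg_N$ with entries $t_j^{(N)}\in\{1,\dots,M\}$ and a shadowing point $w_N$ at scale $\ep'$. Normalize the reference frame by setting $z_N:=f^{\sigma_N}(w_N)$, where $\sigma_N$ is the starting time of the index-$0$ block inside the truncated shadowing; with respect to $z_N$ the index-$j$ block is then shadowed starting at a signed integer $s_j^{(N)}$ built from the $m_i$ and $t_i^{(N)}$ for $i$ strictly between $0$ and $j$, using $f^{-1}$ when $j<0$, which is available since $f$ is a homeomorphism.

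Because each $t_j^{(N)}$ takes values in the finite set $\{1,\dots,M\}$, a Cantor diagonal extraction produces a subsequence $N_k\to\infty$ along which, for every fixed $j\in\ZZ$, $t_j^{(N_k)}$ is eventually constant; denote the limit by $t_j$ and the corresponding stabilized starting time by $s_j\in\ZZ$. By compactness of $X$, pass to a further subsequence so that $z_{N_k}\to z\in X$. For each fixed $j$ and $l\in\{0,\dots,m_j-1\}$, the inequality $d(f^{s_j+l}(z_{N_k}),f^l(x_j))<\ep'$ holds for all sufficiently large $k$; continuity of the single iterate $f^{s_j+l}$ together with $z_{N_k}\to z$ then yields $d(f^{s_j+l}(z),f^l(x_j))\le\ep'<\ep$. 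Hence $z$ $\ep$-shadows $(\sC,\{t_j\})$ with $\max_j t_j\le M=M(\ep')$, establishing the infinite version with $L(\ep)\le M(\ep')$.

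The principal technical subtlety is the bookkeeping of the diagonal extraction: the shift $\sigma_N$ and the starting times $s_j^{(N)}$ are themselves built from the gap values $t_i^{(N)}$, so the extraction must be organised so that the finite block of gaps around index $0$ stabilizes before the starting times can be treated as fixed integers in the continuity step. Aside from this, the argument is a routine compactness/continuity limit, analogous to the flow-version argument in \cite{CLT}.
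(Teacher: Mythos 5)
Your proposal is correct and follows essentially the same route as the paper: truncate the infinite sequence, apply the finite gluing orbit property at scale $\ep'<\ep$, use a pigeonhole/diagonal extraction on the gap values (which lie in the finite set $\{1,\dots,M(\ep')\}$), and pass to a subsequential limit of the shadowing points, using continuity of finitely many iterates to preserve the $\ep'$-estimates. The only difference is that you carry out the two-sided bookkeeping (recentering at the index-$0$ block) explicitly, whereas the paper treats the forward case in detail and declares the two-sided case analogous.
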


\begin{proof} The if part is trivial.

Assume that $(X,f)$ has the gluing
orbit property as defined in Definition \ref{defgo}.
Let $\ep'<\ep$, $m=M(\ep')$ and 
$\sC=\{(x_j,m_j):j\in\ZZ^+\}$ be any infinite (forward) orbit
sequence. Proof for two-sided infinite sequences is analogous.
We denote for $k\ge 2$,
$$\sC_k=\{(x_j,m_j):j=1,\cdots, k\}.$$
For each $k\ge 2$, there is $\sg_k=\{t_1(k),\cdots, t_{k-1}(k)\}$ with
$\max\sg_k\le m$ and
$z_k\in X$ such that $(\sC_k,\sg_k)$ is $\ep'$-shadowed by $z_k$.

There is $t_1\in\{1,\cdots,m\}$ and a subsequence $\{z_{n(1,k)}\}$ of $\{z_k\}$
 such that
 $$t_1(n(1,k))=t_1\text{ for every }k.$$
There is $t_2\in\{1,\cdots,m\}$ and a subsequence $\{z_{n(2,k)}\}$ of $\{z_{n(1,k)}\}$
 such that
 $$t_2(n(2,k))=t_2\text{ for every }k.$$
Apply this procedure inductively, we obtain a sequence $\sg=\{t_j\}_{j=1}^\infty$
and subsequence $\{z_{n(j,k)}\}$
for each $j\in\ZZ^+$ such that
 $$t_j(n(j,k))=t_j\text{ for every }k.$$
Let $z$ be a subsequential limit of $\{z_{n(k,k)}\}$. Then
for every $j\in\ZZ^+$ and $l=0,1,\cdots, m_j-1$,
$$d(f^{s_j+l}(z),f^l(x_j))\le\limsup_{k\to\infty\ (k>j)} 
d(f^{s_j+l}(z_{n(k,k)}), f^l(x_{j}))\le\ep'<\ep,
$$
where
$$s_1=0\text{ and }s_j=\sum_{i=1}^{j-1}(m_i+t_i-1)\text{ for }j\ge2.$$
So $(\sC,\sg)$ is $\ep$-shadowed by $z$ and $\max\sg\le m$.
\end{proof}

Initial idea of the proof of Theorem \ref{posent} comes from the following
classical result. 

\begin{proposition}[cf. \cite{Bowen}]
Assume that $(X,f)$ has specification property.
Assume that $\ep>0$ and there is a subset $E$ of $X$ that is $(1,3\ep)$-separated
and $|E|=N\ge 2$. Then
$$h(f)\ge\frac{\ln N}{M(\ep)}.$$
\end{proposition}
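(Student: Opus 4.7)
The plan is to produce, for each $k\in\ZZ^+$, an $(n_k,\ep)$-separated set of cardinality $N^k$ by using the specification property to realize every word of length $k$ over the alphabet $E$. Concretely, enumerate $E=\{x_1,\dots,x_N\}$ and for each word $\bar{\jmath}=(j_1,\dots,j_k)\in\{1,\dots,N\}^k$ consider the orbit sequence $\sC_{\bar{\jmath}}=\{(x_{j_i},1):i=1,\dots,k\}$ with constant gap $\sg=(M(\ep),\dots,M(\ep))$. Since $\min\sg=M(\ep)$, Definition \ref{spec} furnishes a point $z_{\bar{\jmath}}\in X$ that $\ep$-shadows $(\sC_{\bar{\jmath}},\sg)$.

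With $m_i=1$ and $t_i=M(\ep)$, the definition gives $s_i=(i-1)M(\ep)$, and the shadowing condition (with $l=0$) reads
$$d(f^{(i-1)M(\ep)}(z_{\bar{\jmath}}),x_{j_i})<\ep,\qquad i=1,\dots,k.$$
If $\bar{\jmath}\ne\bar{\jmath}'$, pick the first coordinate $i$ where they differ. Since $E$ is $(1,3\ep)$-separated, $d(x_{j_i},x_{j_i'})>3\ep$, so the triangle inequality yields
$$d\bigl(f^{(i-1)M(\ep)}(z_{\bar{\jmath}}),f^{(i-1)M(\ep)}(z_{\bar{\jmath}'})\bigr)>3\ep-\ep-\ep=\ep.$$
Hence $\{z_{\bar{\jmath}}:\bar{\jmath}\in\{1,\dots,N\}^k\}$ is $(n_k,\ep)$-separated with $n_k=(k-1)M(\ep)+1$, and has cardinality $N^k$ (distinct words produce distinct $z$'s as a byproduct of the above separation estimate).

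Finally, feed this into the entropy definition: $s(n_k,\ep)\ge N^k$, so
$$\limsup_{n\to\infty}\frac{\ln s(n,\ep)}{n}\ge\lim_{k\to\infty}\frac{k\ln N}{(k-1)M(\ep)+1}=\frac{\ln N}{M(\ep)},$$
and since $h(f)$ is obtained by letting $\ep\to 0$ after taking the limsup, we get $h(f)\ge\frac{\ln N}{M(\ep)}$.

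There is no real obstacle here: the only thing to be a bit careful about is matching the indexing conventions from Definition \ref{gapshadow} (in particular that with $m_i=1$ the shift amounts are $(i-1)M(\ep)$, not $iM(\ep)$), and observing that the factor $3\ep$ in the hypothesis is exactly what is needed to absorb the two $\ep$ errors from the shadowing on either side.
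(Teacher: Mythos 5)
Your proof is correct and is essentially the same as the paper's: both realize all $N^k$ words over the alphabet $E$ as orbit sequences of singleton blocks glued with constant gap $M(\ep)$, use the $(1,3\ep)$-separation plus the triangle inequality to get $\ep$-separation of the shadowing points at the times $s_i=(i-1)M(\ep)$, and then pass to the entropy limit (the paper bounds the separated set at time $mn$ rather than your slightly sharper $(k-1)M(\ep)+1$, which makes no difference in the limit). The final step of discarding the $\ep\to 0$ limit is justified, as you note, by monotonicity of $s(n,\ep)$ in $\ep$.
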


\begin{proof}Let $m=M(\ep)$.
For $n\in\ZZ^+$ and each $\xi=\{x_1(\xi),\cdots, x_n(\xi)\}\in E^n$,
let
$$\sC_\xi=\{(x_j(\xi),1):j=1,\cdots,n\}$$
and $\sg_n=\{m,m,\cdots,m\}$.
There is $z_\xi\in X$ that $\ep$-shadows
$(\sC_\xi,\sg_n)$. If $\xi\ne\xi'$ then
there is $j\in\{1,\cdots,n\}$ such that
$$d(x_j(\xi),x_j(\xi'))>3\ep$$
and hence
$$d(f^{s_j}(z_\xi), f^{s_j}(z_{\xi'}))\ge d(x_j(\xi),x_j(\xi'))-d(f^{s_j}(z_\xi),x_j(\xi))-d(f^{s_j}(z_{\xi'}),x_j(\xi'))>\ep.$$
This implies that
$$A_n=\{z_\xi:\xi\in E^n\}$$
is an $(mn,\ep)$-separated set and hence $s(mn,\ep)\ge|A_n|=N^n$. This yields
that
$$h(f)\ge\limsup_{n\to\infty}\frac{\ln s(mn,\ep)}{mn}\ge\frac{\ln N}{m}.$$
\end{proof}

\begin{corollary}Assume that $(X,f)$ has specification property and $X$ is
not a singleton. Then the followings hold.
\begin{enumerate}
\item $h(f)>0$.
\item $\lim_{\ep\to 0}M(\ep)=\infty.$
\item Denote the lower box dimension of $X$ by
$$\dim X:=\liminf_{\ep\to 0}-\frac{\ln s(1,\ep)}{\ln\ep}.$$
Then
$$\liminf_{\ep\to 0}-\frac{M(\ep)}{\ln\ep}\ge\frac{\dim X}{h(f)}.$$
\end{enumerate}
\end{corollary}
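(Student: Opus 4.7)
The plan is to read off all three parts directly from the bound $h(f) \ge \ln N / M(\ep)$ provided by the preceding proposition, which applies whenever $X$ admits a $(1, 3\ep)$-separated set of cardinality $N \ge 2$. For part (1), since $X$ is not a singleton I pick any two distinct points $x, y \in X$; the set $\{x, y\}$ is $(1, 3\ep)$-separated for every $\ep < d(x,y)/3$, and the proposition with $N = 2$ immediately gives $h(f) \ge \ln 2 / M(\ep) > 0$.

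For part (2), I first invoke part (1) to conclude that $X$ is infinite (any finite set has zero entropy), so by compactness $s(1, \delta) \to \infty$ as $\delta \to 0$. Applying the proposition with $N = s(1, 3\ep)$ (valid once $\ep$ is small enough to make $N \ge 2$) yields $M(\ep) \ge \ln s(1, 3\ep) / h(f) \to \infty$. For part (3), dividing the same inequality by $-\ln \ep > 0$ and substituting $\delta = 3\ep$ gives
$$\frac{-M(\ep)}{\ln \ep} \ge \frac{1}{h(f)} \cdot \frac{\ln s(1, \delta)}{-\ln \delta} \cdot \frac{-\ln \delta}{-\ln \delta + \ln 3};$$
as $\delta \to 0$ the last factor tends to $1$, so the $\liminf$ of the right-hand side equals $\dim X / h(f)$, which settles the claim.

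The only subtle point is that parts (2) and (3) tacitly require $h(f) < \infty$: the bound $M(\ep) \ge \ln N / h(f)$ becomes vacuous if entropy is infinite. This is not really an obstacle but a restriction on the natural regime of the corollary; part (3) in particular only makes sense for $0 < h(f) < \infty$, and in the setting of compact metric spaces with finite box dimension this is automatic once $h(f) > 0$ is established in (1).
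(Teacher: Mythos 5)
Your proposal is correct and is essentially the proof the paper intends: the corollary is stated without proof precisely because all three parts are the immediate consequences of the preceding proposition that you spell out (take $N=2$ for (1), $N=s(1,3\ep)$ for (2) and (3), and absorb the harmless factor $\ln 3$ in the denominator). Your closing caveat about $h(f)=\infty$ is the right thing to flag and matches the natural reading of parts (2) and (3).
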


\section{Positive Entropy}

Recall that a point $x\in X$ is called \emph{recurrent} if for every $\ep>0$
there is $n\in\ZZO,$ such that $d(f^n(x),x)<\ep$. A point is called 
\emph{non-recurrent} if it is not recurrent. Given a non-minimal system with
gluing orbit property, to show that it has positive topological
entropy, our idea  
is based on existence of two non-recurrent points such that the forward
 orbit
of one point stays away from the other point, and vice versa.

Note that 
without gluing orbit property,
a non-minimal system 
may have no non-recurrent points
and a system with non-recurrent points may have
zero topological entropy (cf. Example \ref{expallrec} and \ref{expole}).

\begin{lemma}\label{nonrec}
Assume that $(X, f)$ is not minimal and has  gluing orbit property. 
Then $f$
has a non-recurrent point.
\end{lemma}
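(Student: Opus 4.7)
The plan is to build a non-recurrent point $z^*$ by using the gluing orbit property to prescribe that $z^*$ sits near a ``witness'' point $y$ outside $\overline{O(x)}$ at time $0$, and that the orbit of $z^*$ shadows arbitrarily long forward and backward stretches of $O(x) := \{f^n(x) : n \in \ZZ\}$ on either side of time $0$; the separation between $y$ and $O(x)$ is then inherited as a separation between $z^*$ and all but finitely many of its iterates, and the remaining finitely many iterates are handled by ruling out periodicity of $z^*$.

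First I use non-minimality to pick $x \in X$ with $\overline{O(x)} \neq X$, choose $y \in X \setminus \overline{O(x)}$, and set $\delta := \frac{1}{3} d(y, \overline{O(x)})$, giving $d(y, f^n(x)) \geq 3\delta$ for every $n \in \ZZ$. Then I fix $\ep \in (0, \delta/2)$, set $m := M(\ep)$, and for each $N \in \ZZ^+$ apply the gluing orbit property to the rank-three sequence
\[ \sC_N = \{(f^{-N}(x), N), (y, 1), (x, N)\}, \]
producing gaps $t_1(N), t_2(N) \in \{1, \ldots, m\}$ and a shadowing point. After time-shifting this point so that the middle block $(y, 1)$ sits at time $0$, I get $\tilde z_N$ with $d(\tilde z_N, y) < \ep$, together with bounds $d(f^k(\tilde z_N), f^{k + t_1(N) - 1}(x)) < \ep$ for $k \in \{-N - t_1(N) + 1, \ldots, -t_1(N)\}$ and $d(f^k(\tilde z_N), f^{k - t_2(N)}(x)) < \ep$ for $k \in \{t_2(N), \ldots, t_2(N) + N - 1\}$.

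The pigeonhole principle provides a subsequence along which $(t_1(N), t_2(N))$ is constantly equal to some $(t_1^*, t_2^*)$, and compactness of $X$ yields a further subsequential limit $z^*$ of $\tilde z_N$. In the limit, $d(z^*, y) \leq \ep$, and for every $k$ with $k \leq -t_1^*$ or $k \geq t_2^*$ the point $f^k(z^*)$ is within $\ep$ of a point of $O(x)$, hence
\[ d(z^*, f^k(z^*)) \geq d(y, f^k(z^*)) - d(y, z^*) \geq (3\delta - \ep) - \ep > 2\delta. \]
Moreover $z^*$ cannot be periodic: a period $p$ would give $z^* = f^{np}(z^*)$ for every $n$, and choosing $n$ with $np \geq t_2^*$ would force $z^*$ to lie within $\ep$ of $f^{np - t_2^*}(x) \in O(x)$, contradicting $d(y, O(x)) \geq 3\delta > 2\ep$.

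This leaves only the finitely many ``gap'' indices $k \in (-t_1^*, t_2^*) \cap \ZZO$; since $z^*$ is not periodic, each distance $d(z^*, f^k(z^*))$ at such $k$ is strictly positive, and since the collection is finite their minimum $\eta$ is positive as well. Together with the $2\delta$ bound for $|k|$ outside this window, this gives $d(z^*, f^k(z^*)) \geq \min(\eta, 2\delta) > 0$ for every $k \neq 0$, witnessing non-recurrence. I expect the main delicacy to be controlling those few indices inside the gap window where the gluing orbit property offers no direct estimate; the key is that two-sided shadowing far from $y$ already forbids periodicity of $z^*$, which upgrades a finite family of strictly positive distances into a uniform positive lower bound.
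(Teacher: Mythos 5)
Your proof is correct and follows essentially the same route as the paper's: the same three-block orbit sequence $\{(f^{-N}(x),N),(y,1),(x,N)\}$, pigeonhole on the bounded gaps, a subsequential limit re-centered at the middle block, the triangle inequality through $y$ to separate $z^*$ from its far iterates, and non-periodicity to handle the finitely many indices inside the gap window. The only differences are cosmetic (your constants $3\delta$, $\ep<\delta/2$ versus the paper's $\delta$, $\ep<\delta/3$, and a slightly more explicit periodicity argument).
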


\begin{proof}



As $f$ is not minimal, there is a point whose orbit is not dense.
We can find $x,y\in X$ and $\delta>0$
such that
$$d(f^n(x),y)\ge\delta\text{ for every }n\in\ZZ.$$
Let $0<\ep<\frac13\delta$. Assume that for every orbit sequence
$\sC$ there is a gap
$\sg$ with $\max\sg\le M(\ep)$ such that $(\sC,\sg)$ is $\ep$-shadowed.
Let $m=M(\ep)$. 
For each $n\in\ZZ^+$, consider
$$\sC_n=\{(f^{-(n-1)}(x),n),(y,1),(x,n)\}.$$
There is $\sg_n\in\{1,\cdots,m\}^2$ such that
$(\sC_n,\sg_n)$ is $\ep$-shadowed by $z_n'$.
There must be $\sg=(t_1,t_2)\in \{1,\cdots,m\}^2$ such that
$$\{n\in\ZZ^+:\sg_n=\sg\}\text{ is infinite}.$$
We can find a subsequence $\{z_{n_k}'\}$ with $\sg_{n_k}=\sg$ for every $k$.
Let $z_{n_k}=f^{n_k+t_1-1}(z_{n_k}')$ and
$z$ be a subsequential limit of $\{z_{n_k}\}$. Then
$$d(f^{-t_1-j}(z), f^{-j}(x))\le
\limsup_{n_k\to\infty}d(f^{-t_1-j}(z_{n_k}), f^{-j}(x))
\le\ep\text{ for every }j\ge 0$$
and
$$d(f^{t_2+j}(z), f^{j}(x))\le
\limsup_{n_k\to\infty}d(f^{t_2+j}(z_{n_k}), f^{j}(x))
\le\ep\text{ for every }j\ge 0.$$
This implies that
$$d(f^j(z),y)\ge
\delta-\ep> 2\ep\text{ for every $j\le-t_1$ or $j\ge t_2$}.$$
But $d(z,y)<\ep$. So
$$d(f^j(z),z)\ge d(f^j(z),y)-d(z,y)> \ep\text{ for every $j\le-t_1$ or $j\ge
t_2$}.$$
This also indicates that $z$ is not periodic. 
So we have
$$\min\{d(f^j(z),z): -t_1<j<t_2, j\ne 0\}=\ep'>0.$$
Then $z$ is a non-recurrent point as
$$d(f^j(z),z)\ge\min\{\ep,\ep'\}>0\text{ for every }j\in\ZZO.$$
\end{proof}


\begin{lemma}\label{stayaway}
Assume that $(X, f)$ is not minimal and has gluing orbit property.
Then
there are $x,y\in X$ and $\ep>0$ such that
\begin{align*}
d(f^n(x),x)&\ge\ep\text{ for any }n\in\ZZO,\\
d(f^n(x),y)&\ge\ep\text{ for any }n\ge 0,\\
 d(f^n(y),x)&\ge\ep\text{ for any }n\ge 0,\text{ and }\\
 d(f^n(y),y)&\ge\ep\text{ for any }n>0.
\end{align*}
\end{lemma}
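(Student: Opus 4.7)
The plan is to adapt the orbit-gluing construction of Lemma \ref{nonrec} so as to produce two points $x, y$ at once whose forward orbits mutually avoid each other. First, by Lemma \ref{nonrec} fix a non-recurrent point $p \in X$, so that $d(p, f^C(p)) \ge 3\ep$ for all $C \ne 0$ at some $\ep > 0$; since the construction of Lemma \ref{nonrec} places $p$ near a point whose full orbit is non-dense, $Y := \overline{\{f^n(p) : n \in \ZZ\}}$ remains a proper closed invariant subset of $X$. Shrinking $\ep$ if needed, pick $q_1, q_2 \in X \setminus Y$ with $d(q_i, Y) \ge 3\ep$ and $d(q_1, q_2) \ge 3\ep$ (refining the choice of $p$ to secure enough room in degenerate cases). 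Fix $\ep' < \ep$, set $m := M(\ep')$, and let $\omega$ denote a common modulus of uniform continuity for the iterates $f^k$ with $|k| \le m$. Then apply the gluing orbit property to the five-piece sequence
$$\sC_n = \{(f^{-(n-1)}(p), n),\ (q_1, 1),\ (p, n),\ (q_2, 1),\ (p, n)\}$$
for each large $n$, obtaining shadows $z_n'$ with gaps $\sg_n = (t_1^n, t_2^n, t_3^n, t_4^n) \in \{1,\ldots,m\}^4$. A pigeonhole on this finite cube extracts a subsequence along which $\sg_n$ is a constant tuple $(t_1, t_2, t_3, t_4)$. Anchor at the two singleton stops via $x_n := f^{s_2}(z_n')$, $y_n := f^{s_4}(z_n')$, and pass to further subsequences to obtain limits $x, y$ satisfying $d(x, q_1), d(y, q_2) \le \ep'$ together with the one-sided asymptotics $d(f^{-t_1-k}(x), f^{-k}(p)) \le \ep'$, $d(f^{t_2+k}(x), f^k(p)) \le \ep'$, and $d(f^{t_4+k}(y), f^k(p)) \le \ep'$ for every $k \ge 0$, exactly as in the limiting argument of Lemma \ref{nonrec}.

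The four inequalities of the lemma split into two regimes. For times $n$ outside the finite middle ranges $(-t_1, t_2)$ for $x$ and $[0, t_4)$ for $y$, each distance is a triangle-inequality bound between a point within $\ep'$ of $Y$ and an anchor within $\ep'$ of $q_1$ or $q_2$, giving at least $3\ep - 2\ep' \ge \ep$. For middle-range $n$ (for example $0 < n < t_2$), the identity $f^{t_2 - n}(f^n(x)) = f^{t_2}(x)$ together with the characterization forces $f^n(x)$ to lie within $\omega(\ep')$ of $f^{n - t_2}(p) \in Y$, and the same separation from the anchors gives the bound $3\ep - \ep' - \omega(\ep')$; the analogous statement holds for $f^n(y)$. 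This uniform closeness of middle-range points to $Y$ also rules out coincidences such as $y = f^N(x)$ (which would place $q_2$ within $\ep' + \omega(\ep')$ of $Y$) and any period of $x$ or $y$ (which, after iterating above $t_2$ or $t_4$, would force an anchor itself to lie near $Y$). Taking $\ep$ to be the minimum of all resulting positive quantities concludes the proof.

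The main technical obstacle is handling the finite middle-range times where the gluing gaps supply no direct shadowing data. This is overcome by exploiting the uniform continuity of the finitely many iterates $f^k$ with $|k| \le M(\ep')$, and it is precisely why we prepare $p$ to be genuinely non-recurrent (via Lemma \ref{nonrec}) rather than merely orbit-non-dense before launching the pigeonhole construction.
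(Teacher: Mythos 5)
Your construction diverges substantially from the paper's, and it has a genuine gap in the ``middle-range'' analysis. The quantity $\omega(\ep')$ is a modulus of uniform continuity for the family $\{f^k:|k|\le M(\ep')\}$, and $M(\ep')$ grows as $\ep'\to 0$ (it must, for any non-trivial system). So there is no way to choose $\ep'$ making $\omega(\ep')$ small relative to the fixed separation $3\ep$ of $q_1,q_2$ from $Y$: shrinking $\ep'$ enlarges the set of iterates over which the modulus is taken, and $\omega(\ep')$ may well stay at the diameter of $X$ for all small $\ep'$. Hence the bound $3\ep-\ep'-\omega(\ep')$ need not be positive, and the same circularity defeats your exclusion of coincidences ``which would place $q_2$ within $\ep'+\omega(\ep')$ of $Y$.'' The fallback of ``taking the minimum of all resulting positive quantities'' then requires showing each middle-range distance is strictly positive by other means. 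Some of these can be rescued softly (e.g.\ $x$ periodic forces $x=f^{jn}(x)$ to lie within $\ep'$ of $Y$ for large $j$, contradicting $d(x,q_1)\le\ep'$), but not all: if $y=f^{N}(x)$ with $1\le N<t_2$, comparing the two shadowing relations $d(f^{t_2+k}(x),f^k(p))\le\ep'$ and $d(f^{t_4+k}(y),f^k(p))\le\ep'$ yields $d(f^{D}(p),p)\le 2\ep'$ with $D=|N+t_4-t_2|$, which contradicts non-recurrence of $p$ only when $D\ne 0$. The single case $N=t_2-t_4$ survives every soft argument you have available, so $d(f^N(x),y)>0$ is not established and conclusion (b) (likewise (c), via $N=t_4-t_2$) is not proved. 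A smaller issue: that $Y$ is a proper closed invariant set, and that $X\setminus Y$ contains two points each $3\ep$-far from $Y$ and from one another, is asserted rather than argued (it is true, but needs the observation that a non-minimal system with gluing orbit property has no isolated points).

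For contrast, the paper avoids middle ranges entirely. It takes the non-recurrent point $x$ with constant $\delta$, glues the single sequence $\{(x,1),(x,n)\}$, and extracts (after a pigeonhole on the one gap $t$ and a limit) a point $y$ whose \emph{entire} forward orbit satisfies $d(f^j(y),f^j(x))\le\ep_1=\tfrac13\delta$ for all $j\ge 0$, while $y\ne x$ because $d(f^{-t}(y),x)\le\ep_1<d(f^{-t}(x),x)$. Setting $\ep:=d(x,y)\le\ep_1$, all four inequalities then follow from the triangle inequality and non-recurrence of $x$ alone (e.g.\ $d(f^n(y),y)\ge d(f^n(x),x)-d(f^n(y),f^n(x))-d(x,y)\ge\delta-\ep_1-\ep\ge\ep$). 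If you want to salvage your two-anchor scheme, the cleanest repair is to abandon the auxiliary points $q_1,q_2$ and instead make the second point shadow the full forward orbit of the first, as the paper does.
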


\begin{proof}
By Lemma \ref{nonrec}, there is a non-recurrent point $x\in X$.
Assume that $$d(f^n(x),x)\ge\delta\text{ for every $n\in\ZZO$}.$$ 
Let $\ep_1=\frac13\delta$ and 
$m_1=M(\ep_1)$. For each $n$, there is $t_n\le m_1$ 
such that
$$(\{(x,1),(x,n)\},\{t_n\})$$ is $\ep_1$-shadowed by $y_n'$.
Let $y_n=f^{t_n}(y_n')$. 
Then
$$d(f^j(y_n), f^j(x))<\ep_1\text{ for }j=0,1,\cdots, n-1.$$
As $t_n\in\{1,\cdots,m_1\}$ for every $n$, there is $t$ such that
$$\{t_n: t_n=t\}\text{ is infinite}.$$
We can find a subsequence $\{y_{n_k}\}$ such that
$$t_{n_k}=t\text{ for every }k.$$
Let $y$ a a subsequential limit of $\{y_{n_k}\}$. Then
$$d(f^j(y), f^j(x))\le\limsup_{n_k\to\infty}d(f^{j}(y_{n_k}), f^{j}(x))
\le\ep_1\text{ for every }j\ge 0.$$
Moreover
$$d(f^{-t}(y),x)\le\limsup_{n_k\to\infty}d(y_{n_k}',x)\le\ep_1<d(f^{-t}(x),x),$$
which guarantees that $y\ne x$.
Let $$\ep:=d(x,y)=d(f^0(x),y)=d(f^0(y),x)\le\ep_1.$$ 
Then for every $n>0$,
\begin{align*}
d(f^n(x),y)&\ge  d(f^n(x),x)-d(x,y)\ge\delta-\ep\ge\ep,\\
d(f^n(y),x)&\ge  d(f^n(x),x)-d(f^n(x),f^n(y))\ge\delta-\ep_1\ge\ep,\\
d(f^n(y),y)&\ge  d(f^n(x),x)-d(f^n(y),f^n(x))-d(x,y)\ge\delta-\ep_1-\ep\ge\ep.
\end{align*}
\end{proof}

Now we complete the proof of Theorem \ref{posent}.
Let $x,y\in X$ and $\ep>0$ be as in Lemma \ref{stayaway}.
Let $\ep_2=\frac13\ep$ and $m=M(\ep_2)$. For each 
$\xi=\{x_k(\xi)\}_{k=1}^n\in\{x,y\}^n$, consider
$$\sC_\xi=\{(x_k,m):k=1,\cdots, n\}.$$
There is $$\sg_\xi=\{t_j(\xi):j=1,\cdots,n-1\}$$ 
with $\max\sg_\xi\le m$ such
that $(\sC,\sg)$ is $\ep_2$-shadowed by $z_\xi\in X$.
We claim that if $\xi\ne\xi'$ then there is $s<2mn$ such that
$$d(f^s(z_\xi),f^s(z_{\xi'}))>\ep_2.$$

If $\sg_\xi=\sg_{\xi'}$, then there is $k$ such that
$x_k(\xi)\ne x_k(\xi')$. For 
$$s:=\sum_{j=1}^{k-1}(m+t_j(\xi)-1)\le (2m-1)(k-1)<2mn$$
we have
\begin{align*}
d(f^s(z_\xi),f^s(z_{\xi'}))&
\ge 
d(x_k(\xi),x_k(\xi'))-d(f^s(z_\xi),x_k({\xi}))-d(f^s(z_{\xi'}),x_k(\xi'))
\\&>d(x,y)-2\ep_2\ge\ep_2.
\end{align*}
If $\sg_\xi\ne\sg_{\xi'}$, we may assume that there is $k$ such that
$$t_j(\xi)=t_j(\xi')\text{ for }j<k\text{ and }t_k(\xi)>t_k(\xi').$$
Let $l=t_k(\xi)-t_k(\xi')$. Then $1\le l\le m-1$. For
$$s:=\sum_{j=1}^{k-1}(m+t_j(\xi)-1)=\sum_{j=1}^{k-1}(m+t_j(\xi')-1)+l\le (2m-1)(k-1)<2mn$$
we have
\begin{align*}
d(f^s(z_\xi),f^s(z_{\xi'}))&
\ge 
d(x_k(\xi),f^l(x_k(\xi')))-d(f^s(z_\xi),x_k({\xi}))-d(f^s(z_{\xi'}), f^l(x_k(\xi')))
\\&>\min\{d(f^l(x),x), d(f^l(x),y),  d(f^l(y),x),  d(f^l(y),y)\}-2\ep_2
\\&\ge\ep_2.
\end{align*}

Above argument shows that 
$$E=\{z_\xi: \xi\in\{x,y\}^n\}$$
is a $(2mn,\ep_2)$-separated subset of $X$
that contains $2^n$ points.
Hence
$$h(f)\ge\limsup_{n\to\infty}\frac{\ln s(2mn,\ep_2)}{2mn}\ge\limsup_{n\to\infty}\frac{n\ln
2}{2mn}=\frac{\ln2}{2m}>0.$$

\section{Unique Ergodicity and Growth of Periodic Orbits}

\begin{theorem}\label{unierg}
Assume that $(X,f)$ is uniquely ergodic  and has gluing orbit property. Then
it is minimal.
\end{theorem}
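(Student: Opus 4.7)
The plan is to prove the contrapositive: if $(X,f)$ has the gluing orbit property but is not minimal, then $(X,f)$ admits at least two distinct $f$-invariant Borel probability measures. I would produce these as follows.

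First, I would invoke Lemma \ref{stayaway} to obtain points $x,y\in X$ and $\ep>0$ satisfying the four separation inequalities; in particular, the forward orbit $\{f^n(x):n\ge 0\}$ stays at distance $\ge\ep$ from $y$. Consequently, the $\omega$-limit set $\omega(x)$ is a nonempty compact $f$-invariant set disjoint from $B(y,\ep)$. Applying Krylov--Bogolyubov to the subsystem $(\omega(x),f|_{\omega(x)})$ yields an $f$-invariant probability measure $\mu_1$ with $\supp(\mu_1)\subseteq\omega(x)$. Fix $\ep'<\ep/3$ (chosen merely so that the closed ball $\overline{B(y,\ep')}$ is cleanly contained in $B(y,\ep)$). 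Then $\mu_1(\overline{B(y,\ep')})=0$.

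Next I would construct a second invariant measure $\mu_2$ which gives positive mass to $\overline{B(y,\ep')}$, using the gluing orbit property exactly as in the proof of Theorem \ref{posent}. Let $m=M(\ep')$. For each $k\ge 2$, apply the gluing orbit property to the constant rank-$k$ orbit sequence $\sC_k=\{(y,1),(y,1),\ldots,(y,1)\}$ to obtain a gap $\sg_k=\{t_1,\ldots,t_{k-1}\}$ with each $t_i\le m$ and a shadowing point $z_k\in X$ such that $d(f^{s_j}(z_k),y)<\ep'$ for every $j=1,\ldots,k$, where $s_j=\sum_{i<j}t_i$ and hence $s_k\le(k-1)m$. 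Form the empirical measure
\[
\nu_k=\frac{1}{s_k+1}\sum_{i=0}^{s_k}\delta_{f^i(z_k)}.
\]
Since the $k$ points $f^{s_1}(z_k),\ldots,f^{s_k}(z_k)$ all lie in $\overline{B(y,\ep')}$, we get $\nu_k(\overline{B(y,\ep')})\ge k/(s_k+1)\ge k/((k-1)m+1)$, which is bounded below by $1/(m+1)$ for $k$ large. Let $\mu_2$ be any weak$^*$ subsequential limit of $\{\nu_k\}$; by the standard Bogolyubov argument (using $s_k\to\infty$, which follows from $s_k\ge k-1$) the measure $\mu_2$ is $f$-invariant, and the Portmanteau theorem applied to the closed set $\overline{B(y,\ep')}$ yields $\mu_2(\overline{B(y,\ep')})\ge 1/(m+1)>0$. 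Therefore $\mu_1\ne\mu_2$, contradicting unique ergodicity.

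The main obstacle I would anticipate is purely bookkeeping: establishing the correct indexing in the gluing sequence so that the specified shadowing times $s_1,\ldots,s_k$ all lie in $[0,s_k]$ and produce the lower bound on $\nu_k(\overline{B(y,\ep')})$, and then passing this bound to the weak$^*$ limit using the correct direction of Portmanteau's inequality for closed sets. The conceptual content is simply the juxtaposition of Lemma \ref{stayaway}, which forces any measure seen by the forward orbit of $x$ to ignore a neighborhood of $y$, with the gluing-based empirical measure construction, which manufactures a measure that does see such a neighborhood.
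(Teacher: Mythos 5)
Your proof is correct, and its core mechanism is the same as the paper's: the forward orbit of $x$ never enters a fixed neighborhood of $y$, while gluing manufactures an orbit that returns to that neighborhood with frequency at least $1/m$, and these two behaviors are incompatible with unique ergodicity. The packaging differs in two ways worth noting. First, the paper invokes Lemma \ref{shadowinf} to glue the \emph{infinite} constant sequence $\{(y,1)\}_{j\in\ZZ^+}$ into a single actual point $y_0$, then compares Birkhoff averages of a Urysohn bump function along the orbits of $x$ and $y_0$; you instead glue only finite sequences and recover the limiting object via weak$^*$ compactness of empirical measures, so you never need the infinite-shadowing lemma (your diagonal/compactness work is done by Banach--Alaoglu rather than by Lemma \ref{shadowinf}). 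Second, you produce two explicit invariant measures ($\mu_1$ via Krylov--Bogolyubov on $\omega(x)$, $\mu_2$ as a Portmanteau limit), whereas the paper only exhibits two orbits with different asymptotic averages of one continuous function, which already contradicts unique ergodicity without naming the measures. Both routes are sound; yours trades the paper's elementary test-function argument for slightly heavier measure-theoretic machinery. One minor remark: appealing to Lemma \ref{stayaway} is more than you need -- the separation $d(f^n(x),y)\ge\delta$ for all $n$ follows directly from non-minimality (as in the first lines of the paper's proof and of Lemma \ref{nonrec}), so you could avoid routing through the non-recurrent-point construction entirely.
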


\begin{proof}
Assume that $(X,f)$ is not minimal and it has gluing orbit property. There are $x,y\in X$ and $\delta>0$
such that
$$d(f^n(x),y)\ge\delta\text{ for every }n\in\ZZ.$$

Let $0<\ep'<\ep<\frac13\delta$ and $m=M(\ep')$.
Let $$\sC=\{(x_j,1): j\in\ZZ^+\text{ and }x_j=y\text{ for every }j
\}.$$ By Lemma \ref{shadowinf},
there is $y_0\in X$ that $\ep$-shadows 
$(\sC,\sg)$ for some $\sg$ with $\max\sg\le m$.

Take a continuous function $\varphi:X\to\RR$
such that
\begin{align*}
\varphi(z)=1&\text{ for every }z\in\overline{B(y,\ep)}\\
\varphi(z)=0&\text{ for every }z\notin B(y,2\ep)\\
0<\varphi(z)<1&\text{ otherwise. }
\end{align*}
We have
$$\limsup_{n\to\infty}\frac1n\sum_{k=0}^{n-1}\varphi(f^k(x))=0.$$
But
$$\liminf_{n\to\infty}\frac1n\sum_{k=0}^{n-1}\varphi(f^k(y_0))\ge\frac1m$$
as the orbit of $y_0$ enters $\overline{B(y,\ep)}$ at least once in every $m$ iterates.
This implies that $(X,f)$ is not uniquely ergodic.
\end{proof}

Denote by $P_n(f)$ the set of periodic points of $f$ with periods no more
than $n$, and $p_n(f)$ the cardinality of $P_n(f)$. Consider
$$p(f)=\limsup_{n\to\infty}\frac{\ln p_n(f)}{n}.$$
A flow version of the following theorem 
is contained in \cite{BTV}.

\begin{theorem}\label{entper}
If $(X,f)$ has  periodic gluing orbit property, then
$h(f)\le p(f)$.
\end{theorem}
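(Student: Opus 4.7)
The plan is to adapt the classical Bowen-style argument (in the spirit of the Proposition following Definition \ref{perglu}) to the weaker periodic gluing orbit property, by injecting a maximal $(n,\ep)$-separated set into the collection of periodic points whose period is at most $n + m$ for a uniform constant $m$ depending only on $\ep$.

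First I would fix $\ep > 0$, choose some $\ep' < \ep/2$ (for instance $\ep' = \ep/3$), and set $m = M(\ep')$ from Definition \ref{perglu}. For each $n \in \ZZ^+$, pick a maximal $(n,\ep)$-separated set $E_n \subset X$, so that $|E_n| = s(n,\ep)$. For every $x \in E_n$, apply the periodic gluing orbit property to the rank-$1$ orbit sequence $\sC_x = \{(x,n)\}$ with empty gap (permitted by the remark following Definition \ref{perglu}) to obtain an integer $t_x \le m$ and a periodic point $p_x$ satisfying $f^{n+t_x}(p_x) = p_x$ together with $d(f^k(p_x), f^k(x)) < \ep'$ for every $k = 0, 1, \ldots, n-1$.

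Next I would check that the assignment $x \mapsto p_x$ is injective. If $p_x = p_{x'}$ for two distinct elements $x, x' \in E_n$, the triangle inequality would force $d(f^k(x), f^k(x')) < 2\ep' < \ep$ for every $k \in \{0, \ldots, n-1\}$, contradicting the $(n,\ep)$-separation. Since the minimal period of $p_x$ divides $n + t_x \le n + m$, we have $p_x \in P_{n+m}(f)$, and therefore $s(n,\ep) \le p_{n+m}(f)$. Taking logarithms, dividing by $n$, and passing to the limit superior in $n$ gives
\[
\limsup_{n \to \infty}\frac{\ln s(n,\ep)}{n} \;\le\; \limsup_{n \to \infty}\frac{\ln p_{n+m}(f)}{n+m} \;=\; p(f),
\]
since the additive shift by the constant $m$ does not affect the exponential growth rate. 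Letting $\ep \to 0$ then yields $h(f) \le p(f)$.

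Because this is a fairly direct adaptation of the Bowen argument already displayed in the excerpt, I do not anticipate any serious obstacle. The only subtlety worth flagging is that the return time $t_x$ is not a single fixed value, as in classical periodic specification, but varies with $x$; this is harmless because we only need the uniform bound $t_x \le m$ to place every $p_x$ in $P_{n+m}(f)$, and the resulting cardinality bound $s(n,\ep) \le p_{n+m}(f)$ is already enough to control the entropy by the periodic-orbit growth rate.
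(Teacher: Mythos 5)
Your proposal is correct and follows essentially the same route as the paper: both shadow each point of a maximal $(n,\ep)$-separated set by a periodic point of period at most $n+M(\ep')$ via a rank-one orbit sequence with empty gap, and both conclude $s(n,\ep)\le p_{n+m}(f)$ — you phrase this as injectivity of $x\mapsto p_x$, the paper as disjoint $(n,\ep/2)$-balls each containing a periodic point, which is the same observation. The only cosmetic difference is that the paper runs the inequality through an auxiliary level $h<h(f)$ rather than taking the limsup directly as you do.
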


\begin{proof} 
Assume that $h<h(f)$. There is $\ep>0$ and $N>0$ such that $s(n,\ep)>e^{nh}$
for every $n>N$.
Let $E$ be an $(n,\ep)$-separated set with $|E|>e^{nh}$.
Denote
$m=M(\dfrac\ep2)$.
For every $x\in E$, there is $t<m$ such that $\{(x,n),\emptyset\}$
is $\dfrac\ep2$-shadowed by a periodic point with period $n+t$.
Hence every $(n,\dfrac\ep2)$-ball around an element of $E$, which are disjoint
with each other, contains an element
of $P_{n+m}(f)$. This implies that $$p_{n+m}(f)\ge|E|>e^{nh}.$$
It follows that
$$p(f)=\limsup_{n\to\infty}\frac{\ln p_n(f)}{n}\ge h.$$
The result follows as this holds for any $h<h(f)$.
\end{proof}



\begin{corollary}
Assume that $(X,f)$ has  periodic gluing orbit property and $X$ does not
consist of a single periodic orbit. Then
$$0<h(f)\le p(f).$$
\end{corollary}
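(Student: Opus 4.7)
The upper bound $h(f) \le p(f)$ is exactly the content of Theorem \ref{entper}, which applies immediately under the hypothesis. So the plan reduces to establishing the strict positivity $h(f) > 0$. My strategy is to show that $(X,f)$ cannot be minimal, after which Theorem \ref{posent} delivers the conclusion, since periodic gluing orbit property trivially implies gluing orbit property (a periodic-point $\ep$-shadow is in particular an $\ep$-shadow).

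The first step is to observe that the periodic gluing orbit property forces periodic points to be dense in $X$. Given any $x \in X$ and any $\ep > 0$, apply Definition \ref{perglu} to the trivial orbit sequence $\sC = \{(x,1)\}$ of rank one (for which the gap $\sg$ is empty). This produces a periodic point $z$ with $d(z, x) < \ep$. Hence the set of periodic points is dense, and in particular nonempty; fix one periodic point $z_0$ with orbit $\mathcal{O}(z_0) = \{z_0, f(z_0), \ldots, f^{p-1}(z_0)\}$.

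Next I would argue that minimality of $(X,f)$ is incompatible with the hypothesis. Indeed, $\mathcal{O}(z_0)$ is a finite, hence closed, $f$-invariant subset of $X$. If $(X,f)$ were minimal, the orbit closure of every point would equal $X$, forcing $X = \mathcal{O}(z_0)$, i.e. $X$ consists of a single periodic orbit. This contradicts the standing assumption, so $(X,f)$ is not minimal.

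Finally, since $(X,f)$ has gluing orbit property and is not minimal, Theorem \ref{posent} yields $h(f) > 0$, completing the proof. I do not anticipate any genuine obstacle here: this is a clean combination of Theorems \ref{posent} and \ref{entper} with the elementary observation that minimal systems containing a periodic point are themselves reduced to a single periodic orbit.
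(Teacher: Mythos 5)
Your proposal is correct and follows exactly the route the paper intends (the paper leaves this as a direct corollary of Theorems \ref{posent} and \ref{entper}, remarking only that the single-periodic-orbit case must be excluded): applying Definition \ref{perglu} to a rank-one orbit sequence with empty gap produces a periodic point, a minimal system containing a periodic point is a single periodic orbit, and hence the system is non-minimal, so Theorem \ref{posent} gives $h(f)>0$ while Theorem \ref{entper} gives $h(f)\le p(f)$. No gaps.
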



\section{Equicontinuous Systems}

Let $(X,f)$ be  an equicontinuous system . We shall show that
minimality implies gluing orbit property. It is clear that gluing orbit
property implies topological transitivity. For completeness, 
we present a proof that
topological transitivity implies minimality. As every equicontinuous system
has zero topological entropy, the fact that gluing orbit property implies
minimality is also a corollary of Theorem \ref{posent}.

We first prove a lemma that shows that the time needed for the pre-images of $\ep$-balls
to cover $X$ is uniform. We remark that this lemma does not require equicontinuity.

\begin{lemma}\label{lemmacover}
Assume that $(X,f)$ is minimal. Then for every $\ep>0$, there is $N\in\ZZ^+$
 such
that for every $x\in X$,
$$\bigcup_{n=0}^N f^{-n}(B(x,\ep))=X.$$
\end{lemma}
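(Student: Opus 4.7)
The plan is to reduce the uniform cover statement to a pointwise one via compactness, and then eliminate the dependence on $x$ using a finite $(\ep/2)$-net. The only ingredient beyond compactness is the fact that every \emph{forward} orbit is dense in a minimal system, a step that I must verify because the stated definition of minimality refers to two-sided orbits. Consistent with the remark preceding the lemma, this plan does not use equicontinuity.

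First I would verify forward-orbit density. Given $y\in X$, the set $Y=\overline{\{f^n(y):n\ge 0\}}$ is closed and forward-invariant, so by Zorn's lemma it contains a non-empty minimal closed forward-invariant subset $Z$. On $Z$, the closed forward-invariant set $f(Z)\subseteq Z$ must equal $Z$ by minimality of $Z$; applying $f^{-1}$ to $f(Z)=Z$ gives $Z=f^{-1}(Z)$, so the two-sided orbit of any $z\in Z$ lies in $Z$, and the two-sided density hypothesis forces $Z=X$, hence $Y=X$.

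Now fix $\ep>0$. For each $x\in X$, forward-orbit density says every $y\in X$ satisfies $f^n(y)\in B(x,\ep)$ for some $n\ge 0$, so $\{f^{-n}(B(x,\ep))\}_{n\ge 0}$ is an open cover of the compact space $X$, yielding a finite $N(x)\in\ZZ^+$ with $\bigcup_{n=0}^{N(x)}f^{-n}(B(x,\ep))=X$. To make $N$ uniform, I would cover $X$ by finitely many balls $B(x_1,\ep/2),\ldots,B(x_k,\ep/2)$, apply the pointwise statement to each $x_i$ at scale $\ep/2$ to obtain $N_i$, and set $N:=\max_i N_i$. For an arbitrary $x\in X$, pick $x_i$ with $d(x,x_i)<\ep/2$; the triangle inequality gives $B(x_i,\ep/2)\subseteq B(x,\ep)$, and taking pre-images preserves this inclusion, so $\bigcup_{n=0}^{N}f^{-n}(B(x,\ep))\supseteq\bigcup_{n=0}^{N_i}f^{-n}(B(x_i,\ep/2))=X$. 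The only delicate point in the whole plan is the forward-density step, which genuinely uses the homeomorphism structure to upgrade the $f$-invariance of $Z$ to full two-sided invariance; everything else is routine compactness.
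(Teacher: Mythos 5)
Your proof is correct, but it differs from the paper's at both of the two key steps, so a comparison is worthwhile. To get a cover indexed by $n\ge 0$, you first upgrade two-sided minimality to forward-orbit density via Zorn's lemma (a minimal closed forward-invariant set $Z$ satisfies $f(Z)=Z$, hence $f^{-1}(Z)=Z$ because $f$ is a bijection, hence $Z=X$); the paper instead stays with the two-sided cover $X\subset\bigcup_{n=-N'}^{N'}f^{-n}(B(x,\ep))$ and shifts the index window by writing $X=f^{-N'}(X)\subset\bigcup_{n=0}^{2N'}f^{-n}(B(x,\ep))$. Both moves exploit invertibility; the paper's is more elementary (no Zorn), while yours establishes the stronger and reusable fact that every forward orbit is dense. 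For the uniformity of $N$ in $x$, your argument is genuinely simpler and cleaner: a finite $(\ep/2)$-net together with $B(x_i,\ep/2)\subseteq B(x,\ep)$ replaces the paper's construction of the function $r(y)=\max\{d(f^n(y),x): 0\le n\le N_x,\ d(f^n(y),x)<\ep\}$, the verification that $r$ is upper semi-continuous, and the extraction of its maximum $R_x<\ep$ to get a radius $\delta_x$ of stability. The trade-off is that the paper's semi-continuity argument keeps the ball $B(x,\ep)$ at the original scale $\ep$ throughout, whereas you pay by passing to the smaller scale $\ep/2$ at the net points --- which is harmless here since the statement is quantified over all $\ep>0$. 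Every step of your argument checks out, including the delicate point you flag: $f(Z)=Z$ does give $f^{-1}(Z)=f^{-1}(f(Z))=Z$ for a homeomorphism, so $Z$ is fully invariant and two-sided minimality forces $Z=X$.
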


\begin{proof}
Let $\ep>0$ and $x\in X$. As $f$ is minimal, for every $y\in X$, there
is $n\in\ZZ$ such that $f^n(y)\in B(x,\ep)$. Equivalently, $y\in f^{-n}(B(x,\ep))$.
This implies that
$$X\subset\bigcup_{n=-\infty}^\infty f^{-n}(B(x,\ep)).$$
As $X$ is compact, there is $N_x=2N_x'$ such that
$$X\subset\bigcup_{n=-N_x'}^{N_x'} f^{-n}(B(x,\ep))$$
and hence
\begin{equation}\label{eqcover}
X=f^{-N_x'}(X)\subset\bigcup_{n=0}^{N_x} f^{-n}(B(x,\ep))
\end{equation}

For every $y\in X$, 
denote $$r(y)
:=\max\{d(f^n(y),x):{0\le n\le N_x}\text{ such
that }d(f^n(y),x)<\ep.\}$$
By \eqref{eqcover}, we have $r(y)<\ep$ for every $y\in X$. We claim that the function
$r:X\to\RR$ is upper semi-continuous.
 
Assume that $y\in X$ and $r(y)=d(f^{n_y}(y),x)<\ep$. Then for every $\ep'>0$, there is
$\delta>0$ such that for every $z\in B(y,\delta)$,
$$d(f^{n_y}(z),f^{n_y}(y))<\min\{\ep',\ep-r(y)\}.$$
Then
$$d(f^{n_y}(z),x)\le d(f^{n_y}(y),x)+d(f^{n_y}(z),f^{n_y}(y))<\ep.$$
This implies that
$$r(z)\ge d(f^{n_y}(z),x)\ge d(f^{n_y}(y),x)-d(f^{n_y}(z),f^{n_y}(y))>r(y)-\ep'.$$

As $r$ is upper semi-continuous and $X$ is compact, $r$ attains its maximum
$R_x<\ep$ on $X$. Let $\delta_x:=\dfrac{\ep-R_x}{2}>0$. Then for
every $x'\in B(x,\delta_x)$, we have
\begin{align*}
&\min\{d(f^n(y),x'):{0\le n\le N_x}\}
\\\le&\min\{d(f^n(y),x)+d(x,x'):{0\le n\le N_x}\}
\\=&\min\{d(f^n(y),x):{0\le n\le N_x}\}+d(x,x')
\\\le&r(y)+\delta_0
\le R_x+\delta_0
<\ep
\end{align*}
for every $y\in X$.
This implies that
$$X\subset\bigcup_{n=0}^{N_x} f^{-n}(B(x',\ep))\text{ for every }x'\in B(x,\delta_x).$$

Note that $\{B(x,\delta_x):x\in X\}$ is an open cover of $X$. It has a finite
subcover $\{B(x_j,\delta_{x_j}): j=1,\cdots,k\}$. Let $N=\max\{N_{x_j}:j=1,\cdots,k\}$.
Then for every $x\in X$, $x\in B(x_j,\delta_{x_j})$ for some $j$ and hence
$$X\subset\bigcup_{n=0}^{N_{x_j}} f^{-n}(B(x,\ep))\subset\bigcup_{n=0}^{N} f^{-n}(B(x,\ep)).$$

\end{proof}

The proof of Theorem \ref{equieq} is completed by 
Proposition \ref{minigo} and Proposition \ref{transmini}.

\begin{proposition}\label{minigo}
A minimal equicontinuous system has gluing orbit property. 
\end{proposition}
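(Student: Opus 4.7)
The plan is to verify the gluing orbit property directly from the definition, using equicontinuity to propagate closeness along each block and using Lemma \ref{lemmacover} to supply bounded gap times between consecutive blocks.

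Given $\epsilon > 0$, I would first invoke equicontinuity to produce $\delta \in (0, \epsilon)$ such that $d(u,v) < \delta$ implies $d(f^n(u), f^n(v)) < \epsilon$ for every $n \ge 0$. Then I would apply Lemma \ref{lemmacover} to this $\delta$ (using minimality, which is a hypothesis of the proposition) to obtain $N \in \mathbb{Z}^+$ such that $\bigcup_{n=0}^N f^{-n}(B(x, \delta)) = X$ for every $x \in X$. I would then declare $M(\epsilon) := N+1$ and show that this choice works.

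For the construction, given any orbit sequence $\sC = \{(x_j, m_j) : j = 1, \ldots, k\}$, I would build the shadowing point $z$ and the gap $\sg = \{t_j\}$ inductively along the blocks. First, choose any $z \in B(x_1, \delta)$ and set $y_1 := z$. Assuming $y_j \in B(x_j, \delta)$ has been constructed, note that equicontinuity immediately gives $d(f^l(y_j), f^l(x_j)) < \epsilon$ for $l = 0, 1, \ldots, m_j - 1$, which handles the $j$-th block. To select the gap $t_j$, apply Lemma \ref{lemmacover} with the target $x_{j+1}$ to the point $f^{m_j}(y_j) \in X$: there exists $n \in \{0, 1, \ldots, N\}$ such that $f^n(f^{m_j}(y_j)) \in B(x_{j+1}, \delta)$. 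Setting $t_j := n + 1 \in \{1, \ldots, N+1\}$ and $y_{j+1} := f^{m_j + t_j - 1}(y_j)$, we have $y_{j+1} \in B(x_{j+1}, \delta)$, so the induction continues. Since $y_j = f^{s_j}(z)$ by construction, the shadowing condition in Definition \ref{gapshadow} is satisfied and $\max \sg \le N + 1 = M(\epsilon)$.

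The argument is essentially mechanical once the ingredients are lined up; the only mildly delicate point is ensuring that each gap $t_j$ is at least $1$ (as required by $\sg \in (\mathbb{Z}^+)^{k-1}$) rather than possibly zero. This is handled by applying Lemma \ref{lemmacover} to $f^{m_j}(y_j)$ instead of $f^{m_j - 1}(y_j)$ and then shifting the index by one, at the cost of enlarging the bound from $N$ to $N+1$. The case $k = 1$ is trivial (take $z = x_1$ and $\sg = \emptyset$), so the construction covers every orbit sequence. No other obstacles arise: equicontinuity is the only tool needed to pass from $\delta$-closeness at the start of a block to $\epsilon$-closeness throughout it, and Lemma \ref{lemmacover} is precisely the uniformity statement needed to bound the gaps independently of $\sC$.
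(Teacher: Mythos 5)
Your proof is correct and follows essentially the same route as the paper's: equicontinuity converts $\delta$-closeness at the start of each block into $\ep$-closeness along it, and Lemma \ref{lemmacover} (applied with radius $\delta$) supplies gaps bounded by $N+1$, with the same index shift to keep each $t_j\ge 1$. The only cosmetic difference is that the paper takes the shadowing point to be $x_1$ itself rather than an arbitrary point of $B(x_1,\delta)$.
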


\begin{proof}
Let $\ep>0$. By equicontinuity,
there is $\delta>0$ such that
\begin{equation}\label{eqeqc}
d(f^n(x),f^n(y))<\ep\text{ whenever }d(x,y)<\delta.
\end{equation}
 By Lemma
\ref{lemmacover}, there is $M$ such that
$$\bigcup_{n=0}^M f^{-n}(B(x,\delta))=X\text{ for every }x\in X.$$

Let $\sC=\{(x_j,m_j):j=1,\cdots,k\}$ be any orbit chain. We claim that there
is a gap $\sg$ with $\max\sg\le M+1$ such that $(\sC,\sg)$ can be $\ep$-shadowed
by $x_1$.

For each $j=1,\cdots, k-1$, we have
$$\bigcup_{n=1}^{M+1} f^{-n}(B(x_{j+1},\delta))=f^{-1}(\bigcup_{n=0}^M f^{-n}(B(x_{j+1},\delta)))=X\ni f^{s_j+m_j-1}(x_1),$$
where
$$s_1=0\text{ and }s_j=\sum_{i=1}^{j-1}(m_i+t_i-1)\text{ for }j=2,\cdots,k.$$
There is $t_j\in\ZZ^+$ such that 
$$t_j\le M+1\text{ and }f^{t_j}(f^{s_j+m_j-1}(x_1))\in
B(x_{j+1},\delta).$$
By \eqref{eqeqc}, this implies that
$$(f^{s_{j+1}+l}(x_1), f^l(x_{j+1}))<\ep\text{ for every }l=0,1,\cdots, m_j-1.$$
Hence $(\sC,\sg)$ is $\ep$-shadowed by $x_1$ for $\sg=\{t_j:j=1,\cdots, k-1\}$.
\end{proof}


\begin{proposition}\label{transmini}
A topological transitive equicontinuous system is minimal.
\end{proposition}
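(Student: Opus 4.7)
The plan is to exploit equicontinuity in order to propagate the dense orbit of a single transitive point to every point of $X$. First I would use topological transitivity together with a Baire category argument to produce a point $z\in X$ whose \emph{forward} orbit $\{f^n(z):n\ge 0\}$ is dense in $X$; for a homeomorphism of a compact metric space this is standard, as bilateral and one-sided transitivity coincide and $\bigcap_k\bigcup_{n\ge 0}f^{-n}(U_k)$ is a dense $G_\delta$ for any countable basis $\{U_k\}$ of $X$.

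Next, given an arbitrary $y\in X$, I would show that its forward orbit is also dense. Fix a non-empty open $U\subset X$ with $B(w,\ep)\subset U$ for some $w\in U$ and $\ep>0$, and use equicontinuity to pick $\delta\in(0,\ep/2)$ such that $d(a,b)<\delta$ implies $d(f^n(a),f^n(b))<\ep/2$ for every $n\ge 0$. Density of the orbit of $z$ yields $m_0\ge 0$ with $f^{m_0}(z)\in B(y,\delta)$; since the closure of the tail $\{f^m(z):m\ge m_0\}$ is $f$-invariant and contains $f^{m_0}(X)=X$, the tail is still dense, so I can pick $m_1>m_0$ with $f^{m_1}(z)\in B(w,\delta)$. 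Applying equicontinuity to the $\delta$-close pair $(f^{m_0}(z),y)$ gives $d(f^{m_1-m_0}(y),f^{m_1}(z))<\ep/2$, whence the triangle inequality puts $f^{m_1-m_0}(y)$ in $B(w,\ep)\subset U$.

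Since $U$ and $y$ were arbitrary, every point of $X$ has dense forward orbit, and so $(X,f)$ is minimal. The main obstacle I anticipate is the first step, namely upgrading the bilateral transitivity in the paper's definition to the existence of a point with dense forward orbit; this is essential because equicontinuity is only assumed in forward time, so the shadowing argument in the second step only makes sense for non-negative iterates.
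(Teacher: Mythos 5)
Your overall strategy is sound but it is a genuinely different, and longer, route than the paper's. The paper never constructs a transitive point: given an arbitrary $x$ and a target $y$, it picks $\delta$ from equicontinuity, uses transitivity \emph{once} to find $z_0\in B(x,\delta)$ with $f^n(z_0)\in B(y,\ep/2)$ for some $n\ge 0$, and concludes $d(f^n(x),y)<\ep$ by the triangle inequality. Your second step is the same ``equicontinuity propagates closeness along forward orbits'' device, applied to the pair $(f^{m_0}(z),y)$ rather than to $(z_0,x)$; the Baire category construction of $z$ and the tail-density argument are extra machinery that the direct proof does not need. What your version buys is a reusable intermediate fact (a point with dense forward orbit); what it costs is the burden of justifying that fact, which is exactly where the trouble lies.

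The soft spot is the claim that bilateral and one-sided transitivity coincide for every homeomorphism of a compact metric space. That is false in general: take $X=\{-\infty\}\cup\ZZ\cup\{+\infty\}$ (two-point compactification, metrized inside $[-1,1]$) and $f(n)=n+1$, $f(\pm\infty)=\pm\infty$. Every nonempty open set contains an integer, so the system is transitive with $n\in\ZZ$; but for $U$ a small neighborhood of $+\infty$ and $V$ a small neighborhood of $-\infty$ no $n\ge 0$ works, $\bigcup_{n\ge0}f^{-n}(V)$ is not dense, and the Baire intersection you write down is empty. The equivalence needs an extra hypothesis (no isolated points, or non-wandering), neither of which is assumed. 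To close the gap you must invoke equicontinuity itself: by Arzel\`a--Ascoli the family $\{f^n:n\ge0\}$ is precompact in $C(X,X)$, so for every $\ep>0$ there are $m<m'$ with arbitrarily large $N=m'-m$ and $\sup_x d(f^{m}(x),f^{m'}(x))<\ep$; evaluating at $x=f^{-m}(y)$ gives $\sup_y d(y,f^{N}(y))<\ep$, and composing a negative transition time with such an $N$ converts it into a non-negative one. To be fair, the paper's own proof makes the same silent jump from the ``$n\in\ZZ$'' in its definition of transitivity to ``there is $n\ge 0$''; but since you single this step out as the crux, you should be aware that the justification you offer for it does not hold at the stated level of generality and genuinely requires the equicontinuity hypothesis.
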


\begin{proof}
Let $x,y\in X$ and $\ep>0$. As $f$ is equicontinuous, there is $\delta>0$
such that
$$d(f^n(z),f^n(x))<\frac\ep2\text{ for every }z\in B(x,\delta).$$
As $f$ is topologically transitive, there is $n\ge 0$ such that
$$B(x,\delta)\cap f^{-n}(B(y,\frac\ep2))\ne\emptyset.$$
Take 
$$z_0\in B(x,\delta)\cap f^{-n}(B(y,\frac\ep2)).$$
Then
$$d(f^n(x),y)\le d(f^n(x),f^n(z_0))+d(f^n(z_0),y)<\frac\ep2+\frac\ep2=\ep.$$
This implies that the orbit of every $x\in X$ is dense, i.e. $f$ is minimal.
\end{proof}

\section{Examples}

\begin{example}
In \cite{BV}, it is shown that a topologically transitive subshift of finite
type has gluing orbit property. Note that such a system has periodic points.
As a corollary of Theorem \ref{posent}, it has positive topological entropy
if it does not consist of a single periodic orbit.
\end{example}

\begin{example}
As a corollary of Theorem \ref{equieq}, adding machines have gluing orbit
property.
\end{example}

\begin{example}\label{expallrec}
A rational rotation is not minimal but every point is recurrent 
(cf. Lemma \ref{nonrec}). 
\end{example}

\begin{example}\label{expole}
Consider the map
$$f(x)=x^2 \mod 1$$
on the unit circle $[0,1]/\sim$.
It is not minimal and has non-recurrent points and zero topological entropy
(cf. Theorem \ref{posent} and Lemma \ref{stayaway}). It is also uniquely
ergodic (cf. Theorem \ref{unierg}). It fails to satisfy the theorems as it
does not have gluing orbit property.
\end{example}

\begin{example}\label{exintmap}
According to \cite{MS}, there are $C^\infty$ interval maps with
periodic points of period $2^n$ for any $n\in\ZZ^+$ 
and zero topological entropy that are chaotic in the sense
of Li-York. Theorem \ref{posent} implies that all such maps can not have gluing
orbit property.
\end{example}

\begin{example}\label{exaper}
The subshift on the closure of the orbit of an almost periodic point, as
constructed in \cite[12.28]{GH} and \cite{HK},
does not have gluing orbit property. The gap needed before shadowing an
orbit segment of length $L$ may be no less than $L$ and hence neither uniform
nor tempered.
Such a system is minimal. It can have zero
topological entropy and can also have positive topological entropy. So minimality
itself does not imply gluing orbit property, no matter how much is the topological
entropy (cf. Theorem \ref{equieq}).
\end{example}

\section{The Semiflow Case}

In this section we give a proof of Theorem \ref{posent} in the semiflow
case. Throughout this section, $f^t$ is assumed to be a semiflow on $X$ that
is not minimal and has gluing orbit property. We first state the definition
of gluing orbit property in this case and note the
difference. Idea of the proof is similar to the homeomorphism case. There
are two major technical differences. Non-recurrence is established after
a time period and the orbit sequences for finding separated sets are more
carefully designed.

\begin{definition}
A semiflow $(X,f)$ is said to  have \emph{gluing orbit property}
 if for every $\ep>0$ there
is $M(\ep)>0$ such that for any \emph{orbit sequence}
$$\sC=\{(x_j,m_j)\in X\times[0,\infty):j=1,\cdots,k\},$$ 
there is a \emph{gap}
$$\sg=\{t_j\in[0,\infty): j=1,\cdots,k-1\}$$ 
such that $\max\sg\le M(\ep)$ and $(\sC,\sg)$  can be $\ep$-shadowed in the
following sense:
for every $j=1,\cdots,k$,
$$(f^{s_{j}+t}(z), f^t(x_j))<\ep\text{ for every }t\in[0,m_j],$$
where
$$s_1=0\text{ and }s_j=\sum_{i=1}^{j-1}(m_i+t_i)\text{ for }j=2,\cdots,k.$$
\end{definition}


\begin{lemma}\label{nonrecsf}
There is $x_0\in X$, $\ep>0$ and $\tau>0$ such that
$$d(f^t(x_0),x_0)>\ep\text{ for any }t\ge\tau.$$
\end{lemma}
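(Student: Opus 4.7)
The plan is to adapt the proof of Lemma \ref{nonrec} to the semiflow setting. Since we only have forward time, and since the conclusion asks not for plain non-recurrence but for a uniform $\tau$ beyond which the orbit stays $\ep$ away from $x_0$, I want to: (i) use non-minimality to find a pair $x, y \in X$ with $d(f^t(x), y) \geq \delta$ for all $t \geq 0$; (ii) use gluing orbit to concatenate a short orbit segment near $y$ with a long orbit segment tracking $x$; and (iii) take a subsequential limit to obtain $x_0$ that lies $\ep$-close to $y$ but whose forward orbit (after a bounded lag) stays $\ep$-close to the orbit of $x$, and hence uniformly away from $y$ and therefore from $x_0$.

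First I would invoke non-minimality to produce $x, y \in X$ and $\delta > 0$ with $d(f^t(x), y) \geq \delta$ for every $t \geq 0$. Fix $\ep$ with $0 < \ep < \delta/3$ and let $m = M(\ep)$ come from the semiflow gluing orbit property. For each $n \in \ZZ^+$ I would apply gluing orbit to $\sC_n = \{(y, 1), (x, n)\}$ to obtain a gap $t_n \in [0, m]$ and a shadow $z_n$ such that $d(z_n, y) < \ep$ and $d(f^{1+t_n+t}(z_n), f^t(x)) < \ep$ for every $t \in [0, n]$.

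Since $[0, m]$ and $X$ are compact, I would pass to a subsequence along which $t_n \to t^* \in [0, m]$ and $z_n \to x_0$ for some $x_0 \in X$. Joint continuity of the semiflow then yields $d(x_0, y) \leq \ep$. For any fixed $s > 1 + m$ we eventually have $s > 1 + t_n$ and $s - 1 - t_n \leq n$, so the shadowing gives $d(f^s(z_n), f^{s-1-t_n}(x)) < \ep$, and passing to the limit yields $d(f^s(x_0), f^{s-1-t^*}(x)) \leq \ep$. Combining with $d(f^{s-1-t^*}(x), y) \geq \delta$ and two applications of the triangle inequality gives
\[
d(f^s(x_0), x_0) \geq d(f^s(x_0), y) - d(y, x_0) \geq (\delta - \ep) - \ep = \delta - 2\ep \quad \text{for all } s > 1 + m.
\]
Setting $\tau := m + 2$ and replacing the lemma's $\ep$ by any positive quantity strictly less than $\delta - 2\ep$ completes the proof.

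The main semiflow-specific obstacle is handling the continuous-valued gap $t_n$: in the homeomorphism case of Lemma \ref{nonrec} one pigeonholes on integer gaps to obtain a constant-gap subsequence, but here we must instead extract a convergent subsequence from $[0, m]$, and the shadowed target time $s - 1 - t_n$ shifts with $n$. This forces the use of joint continuity of the semiflow when passing to the limit, and motivates stipulating $s > 1 + m$ (rather than $s \geq 1 + m$) to stay safely away from the boundary where $s$ could otherwise lie below $1 + t_n$ for some tail of the sequence.
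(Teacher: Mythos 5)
Your proposal is correct and follows essentially the same route as the paper's proof: both use non-minimality to produce $x,y,\delta$ with $d(f^t(x),y)\ge\delta$ for $t\ge 0$, glue a short segment at $y$ to a long segment at $x$, extract a convergent subsequence of gaps from the compact interval $[0,m]$ together with a subsequential limit $x_0$ of the shadowing points, and conclude by triangle inequalities. The only cosmetic differences are your use of $(y,1)$ in place of the paper's $(y,0)$ and your passing to the limit at a fixed time $s$ on the $x_0$-side rather than at a fixed time $t$ on the $x$-side.
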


\begin{proof}
As $f$ is not minimal, there is a point whose orbit is not dense.
We can find $x,y\in X$ and $\delta>0$
such that
$$d(f^t(x),y)\ge\delta\text{ for every }t\ge0.$$
Let $0<\ep<\frac13\delta$ and $m=M(\ep)$. 
For each $n\in\ZZ^+$, consider
$$\sC_n=\{(y,0),(x,n)\}.$$
There is $\tau_n\in[0,m]$ such that
$(\sC_n,\{\tau_n\})$ is $\ep$-shadowed by $z_n$.
There must be a subsequence $\{\tau_{n_k}\}$ that converges to
 $\tau\in[0,m]$. 
Let 
$x_0$ be a subsequential limit of $\{z_{n_k}\}$. Then
$$d(f^{\tau+t}(x_0), f^{t}(x))\le\limsup_{n_k\to\infty}
d(f^{\tau_{n_k}+t}(z_{n_k}), f^{t}(x))\le\ep\text{ for every }t\ge 0.$$
and
$$d(f^{\tau+t}(x_0),y)\ge d(f^t(x),y)- d(f^{\tau+t}(x_0), f^{t}(x))
>2\ep\text{ for every $t\ge0$}.$$
Note that $d(x_0,y)<\ep$. So
$$d(f^{\tau+t}(x_0),x_0)\ge d(f^{\tau+t}(x_0),y)-d(x_0,y)>
\ep\text{ for every $t\ge0$}.$$
\end{proof}


\begin{lemma}\label{stayawaysf}
There are $x,y\in X$, $\ep>0$ and $T>0$ such that
\begin{align*}
d(f^t(x),x)&\ge\ep\text{ for any }t\ge T,\\
d(f^t(y),x)&\ge\ep\text{ for any }t\ge T,\\
 d(f^t(y),y)&\ge\ep\text{ for any }t\ge T,\text{ and }\\
 d(f^t(x),y)&\ge\ep\text{ for any }t\ge0.
\end{align*}
\end{lemma}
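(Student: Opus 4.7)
Following the proof of Lemma \ref{stayaway}, I would first invoke Lemma \ref{nonrecsf} to obtain $x_0 \in X$, $\ep^* > 0$, and $\tau_0 > 0$ with $d(f^t(x_0), x_0) > \ep^*$ for every $t \ge \tau_0$. I would also retain the auxiliary points $\tilde x, \tilde y$ produced in its proof, which satisfy $d(f^t(\tilde x), \tilde y) \ge 3\ep^*$ for $t \ge 0$, together with $d(x_0, \tilde y) \le \ep^*$ and $d(f^{\tau_0+t}(x_0), f^t(\tilde x)) \le \ep^*$ for $t \ge 0$.

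Next, choose $\ep_1 \ll \ep^*$ and set $m := M(\ep_1)$. For each $n \in \ZZ^+$, apply the gluing orbit property to the orbit sequence $\sC_n := \{(x_0, \tau_0), (x_0, n)\}$, obtaining a shadowing point $y_n$ and gap $\sigma_n \in [0, m]$. Pass to a subsequence so that $\sigma_n \to \sigma^*$ and $y_n$ converges, and define $y := f^{\tau_0+\sigma^*}(\lim_k y_{n_k})$. By continuity of the semiflow and letting $n \to \infty$, the limit point $y$ satisfies $d(y, x_0) \le \ep_1$ and $d(f^t(y), f^t(x_0)) \le \ep_1$ for every $t \ge 0$. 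Uniform continuity of the family $\{f^s\}_{s\in[0,m]}$ on the compact space $X$, combined with non-recurrence at time $\tau_0 + \sigma^* \ge \tau_0$, then forces $y \ne x_0$ once $\ep_1$ is sufficiently small (otherwise one would conclude $d(x_0, f^{\tau_0+\sigma^*}(x_0))$ is arbitrarily small, contradicting $> \ep^*$).

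Taking $x := x_0$, $T$ slightly larger than $\tau_0 + m$, and $\ep > 0$ suitably small, the triangle inequality should yield conditions (a)--(c) exactly as in the proof of Lemma \ref{stayaway}: $d(f^t(x), x) > \ep^*$ by non-recurrence, and both $d(f^t(y), x)$ and $d(f^t(y), y)$ are bounded below by $\ep^* - O(\ep_1)$ using $d(f^t(y), f^t(x_0)) \le \ep_1$ and $d(y, x_0) \le \ep_1$.

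The hard part will be condition (d), $d(f^t(x), y) \ge \ep$ for \emph{every} $t \ge 0$. For $t \ge \tau_0$ it follows from non-recurrence of $x_0$, but for small $t \in (0, \tau_0)$ the iterate $f^t(x_0)$ lies close to $x_0$, which is within $\ep_1$ of $y$, so $d(f^t(x_0), y)$ may drop below any prescribed $\ep$---a difficulty absent in the homeomorphism case, where $d(f^n(x), x) \ge \delta$ for every $n \ne 0$. I expect to overcome this by more carefully designing the orbit sequence used for shadowing, inserting a $(\tilde y, 0)$ piece so that $y$ is pinned near $\tilde y$ rather than near $x_0$, and correspondingly replacing $x = x_0$ by a later iterate $f^{\tau'}(x_0)$ lying in the "$\tilde x$-region"; then the separation $d(f^t(\tilde x), \tilde y) \ge 3\ep^*$ together with the shadowing inequalities propagates to a uniform lower bound $d(f^t(x), y) \ge 2\ep^* - O(\ep_1)$ for all $t \ge 0$.
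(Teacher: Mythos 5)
Your skeleton for the first three inequalities matches the paper's: invoke Lemma \ref{nonrecsf}, glue $\{(x_0,\tau_0),(x_0,n)\}$, extract a convergent subsequence of gaps and shadowing points, and run the triangle inequality; that part is fine. But the proof is incomplete exactly where you flag it, namely condition (d) for $t\in[0,\tau_0+\sigma^*)$, and the fix you sketch does not work. Inserting a $(\tilde y,0)$ piece and replacing $x=x_0$ by a later iterate $f^{\tau'}(x_0)$ destroys conditions (a)--(c): for a semiflow, non-recurrence does not transfer from $x_0$ to $f^{\tau'}(x_0)$ (you cannot apply $f^{-\tau'}$ to a near-return of $f^{\tau'}(x_0)$), so the first inequality is lost for the new $x$; and since, by your own retained facts, $x_0$ itself satisfies $d(x_0,\tilde y)\le\ep^*$, pinning $y$ near $\tilde y$ while keeping $x=x_0$ would make $d(x,y)$ small and break (d) at $t=0$ --- you cannot have both. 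Separately, your argument that $y\ne x_0$ ``once $\ep_1$ is sufficiently small'' is circular: the uniform-continuity estimate must hold for the family $\{f^s\}$ over the window $[0,\tau_0+M(\ep_1)]$, whose length depends on $\ep_1$, so there is no guarantee that any $\ep_1$ is small enough. And even if it worked, it only rules out $t=0$, whereas (d) needs a positive lower bound for every $t$ in $[0,\tau_0+\sigma^*]$.

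The paper's resolution is simpler and avoids all of this. Keep $x=x_0$ and let $y$ be the \emph{unshifted} limit of the shadowing points, so that $d(f^{\tau_0+\sigma^*+t}(y),f^t(x))\le\ep_1$ for all $t\ge0$ and $d(y,x)\le\ep_1$. If $y=f^t(x)$ for some $t\ge0$, then $f^{\tau_0+\sigma^*}(y)=f^{\tau_0+\sigma^*+t}(x)$ would be simultaneously within $\ep_1$ of $x$ (shadowing at time $0$ of the second piece) and at distance $\ge\ep^*>\ep_1$ from $x$ (non-recurrence, since $\tau_0+\sigma^*+t\ge\tau_0$) --- a contradiction obtained by pushing \emph{forward}, with no inversion and no uniform continuity. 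Hence $t\mapsto d(f^t(x),y)$ is continuous and strictly positive on the compact interval $[0,\tau_0+\sigma^*]$, and its minimum $\ep>0$ handles the small-$t$ range of (d); for $t\ge\tau_0+\sigma^*$ one already has $d(f^t(x),y)\ge d(f^t(x),x)-d(x,y)\ge\ep^*-\ep_1\ge\ep$. With this $\ep$ and $T=2\tau_0+\sigma^*$ all four inequalities follow. This compactness-plus-injectivity step is the one genuinely new idea of the semiflow case, and it is the one missing from your proposal.
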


\begin{proof}
By Lemma \ref{nonrecsf}, there is $x\in X$, $\delta>0$ and $t_0>0$ such
that
$$d(f^t(x),x)\ge\delta\text{ for every $t\ge t_0$}.$$ 
Let $\ep_1=\frac13\delta$ and 
$m_1=M(\ep_1)$. For each $n$, there is $\tau_n\in[0, m_1]$ 
such that
$$(\{(x,t_0),(x,n)\},\{\tau_n\})$$ is $\ep_1$-shadowed by $y_n$.
There is a subsequence $\{\tau_{n_k}\}$ that converges to $\tau\in[0,m_1]$.
Let $y$ be a subsequential limit of $\{y_{n_k}\}$. Then
\begin{equation}\label{followx}
d(f^{t_0+\tau+t}(y), f^{t}(x))\le\ep_1\text{ for every }t\ge 0.
\end{equation}
This yields that for every $t\ge 0$, we have
\begin{align}
d(f^{2t_0+\tau+t}(y),x)&
\ge d(f^{t_0+t}(x),x)-d(f^{2t_0+\tau+t}(y),f^{t_0+t}(x))
\ge\delta-\ep_1=2\ep_1,\notag\\
d(f^{2t_0+\tau+t}(y),y)&
\ge d(f^{2t_0+\tau+t}(y),x)-d(x,y)
\ge 2\ep_1-\ep_1=\ep_1,\text{ and}\notag\\
d(f^{t_0+\tau+t}(x),y)&\ge d(f^{t_0+\tau+t}(x),x)-d(x,y)\ge
\delta-\ep_1= 2\ep_1.\label{awayy}
\end{align}
Equation \eqref{followx} also guarantees that $f^t(x)\ne y$ for any $t\ge 0$, as
$$d(f^{t_0+\tau}(y), x)\le\ep_1<\delta\le d(f^{t_0+\tau+t}(x),x).$$
Let 
$$\ep:=\min\{d(f^t(x),y):0\le t\le t_0+\tau\}.$$
Then $\ep\in(0,\ep_1]$. Together with \eqref{awayy} we have
$$d(f^t(x),y)\ge\ep\text{ for every }t\ge 0.$$
The lemma holds for $x,y,\ep$ and $T=2t_0+\tau$.
\end{proof}

\begin{proposition}
$(X,f)$ has positive topological entropy.
\end{proposition}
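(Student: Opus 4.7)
The plan is to mirror the closing argument of Theorem~\ref{posent}, producing an exponentially large $(T_n,\ep_2)$-separated set indexed by binary strings over the alphabet $\{x,y\}$ provided by Lemma~\ref{stayawaysf}. First I would fix $x,y\in X$, $\ep>0$ and $T>0$ as in Lemma~\ref{stayawaysf}, set $\ep_2:=\frac{\ep}{3}$ and $m:=M(\ep_2)$, and pick a piece length $L\ge T+m$. For each $n\ge 1$ and each $\xi=(\xi_1,\ldots,\xi_n)\in\{x,y\}^n$, apply the gluing orbit property to the orbit sequence $\sC_\xi=\{(\xi_k,L):k=1,\ldots,n\}$ to obtain a gap $\sg_\xi=(t_1(\xi),\ldots,t_{n-1}(\xi))\in[0,m]^{n-1}$ and a shadowing point $z_\xi\in X$, where $s_k(\xi)=(k-1)L+\sum_{j<k}t_j(\xi)$ and $d(f^{s_k(\xi)+t}(z_\xi),f^t(\xi_k))<\ep_2$ for every $t\in[0,L]$.

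Next I would verify that for distinct $\xi,\xi'\in\{x,y\}^n$ the two shadowing points satisfy $d(f^s(z_\xi),f^s(z_{\xi'}))>\ep_2$ for some $s\in[0,n(L+m)]$, so that $\{z_\xi\}$ is a $(n(L+m),\ep_2)$-separated set with $2^n$ elements, yielding $h(f)\ge\frac{\ln 2}{L+m}>0$. As in the discrete proof, this splits into two cases. If $\sg_\xi=\sg_{\xi'}$, take the smallest $k$ with $\xi_k\ne\xi'_k$; then $s_k(\xi)=s_k(\xi')$, and combining the triangle inequality with $d(x,y)\ge\ep$ (the $t=0$ instance of the fourth bound in Lemma~\ref{stayawaysf}) gives separation at $s=s_k(\xi)$. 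If $\sg_\xi\ne\sg_{\xi'}$, pick the smallest $k$ with $t_k(\xi)\ne t_k(\xi')$ and assume WLOG $l:=t_k(\xi)-t_k(\xi')\in(0,m]$. Then $s_{k+1}(\xi)=s_{k+1}(\xi')+l$, so $f^{s_{k+1}(\xi)}(z_\xi)$ lies within $\ep_2$ of $\xi_{k+1}$, while $f^{s_{k+1}(\xi)}(z_{\xi'})$ lies within $\ep_2$ of $f^l(\xi'_{k+1})$.

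The \emph{main obstacle} is bounding this last comparison. In the discrete proof the gap difference satisfies $l\ge 1$, and the separation estimates of Lemma~\ref{stayaway} hold for every nonzero integer, which closes the argument immediately. In the semiflow setting $l$ may lie in $(0,T)$, where Lemma~\ref{stayawaysf} gives no lower bound on $d(\xi_{k+1},f^l(\xi'_{k+1}))$ because the flow moves the shadowed endpoint only infinitesimally. This is precisely the "more careful design of orbit sequences" referenced in the opening of Section~7. The natural remedy is to shift the comparison into the interior of the $(k+1)$-th window: evaluate at $s'=s_{k+1}(\xi)+T$ instead. Since $L\ge T+m\ge T+l$, the time $s'$ lies inside the $(k+1)$-th piece of $\xi$ and $s'-s_{k+1}(\xi')=T+l$ lies inside the $(k+1)$-th piece of $\xi'$, so the shadowing gives $f^{s'}(z_\xi)\approx f^T(\xi_{k+1})$ and $f^{s'}(z_{\xi'})\approx f^{T+l}(\xi'_{k+1})$ with $T+l\ge T$. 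The four separation inequalities of Lemma~\ref{stayawaysf}, applied to the four possible combinations of $(\xi_{k+1},\xi'_{k+1})\in\{x,y\}^2$ and read off the orbit point $f^{T+l}(\xi'_{k+1})$, then suffice (possibly after a triangle-inequality massage converting each $d(f^t(a),b)\ge\ep$ into the form actually needed) to conclude $d(f^T(\xi_{k+1}),f^{T+l}(\xi'_{k+1}))>3\ep_2$, hence $d(f^{s'}(z_\xi),f^{s'}(z_{\xi'}))>\ep_2$, completing the separation verification and the proof.
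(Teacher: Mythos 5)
There is a genuine gap at the crux of your Case B (differing gaps). The inequality you need, $d(f^T(\xi_{k+1}), f^{T+l}(\xi'_{k+1})) > 3\ep_2$, does not follow from Lemma \ref{stayawaysf} and is false in general. That lemma only bounds the distance from points on the forward orbits of $x$ and $y$ to the \emph{base points} $x$ and $y$ themselves; it says nothing about the distance between two points that are both at least time $T$ along these orbits. The clearest failure is $\xi_{k+1}=\xi'_{k+1}=a$: by continuity of the semiflow in $t$, $d(f^T(a), f^{T+l}(a))\to 0$ as $l\to 0^+$, so when the gap difference $l$ is small the two shadowing orbits track the \emph{same} orbit segment with a tiny time offset and are not $\ep_2$-separated anywhere in the $(k+1)$-st window, no matter where inside it you evaluate. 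Even when $\xi_{k+1}\ne\xi'_{k+1}$, the forward orbits of $x$ and $y$ may approach each other (say, both attracted to a set far from $\{x,y\}$), so $d(f^T(x), f^{T+l}(y))$ admits no lower bound either. No triangle-inequality manipulation converts the hypotheses $d(f^t(a),b)\ge\ep$ (one moving point against a fixed base point) into the two-moving-points estimate your argument requires.

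This is precisely the difficulty the paper's proof is organized around, and it is why the naive encoding $(\xi_k,L)$ cannot be repaired by shifting the evaluation time: one must propagate past the first gap discrepancy to the first \emph{symbol} discrepancy and control the accumulated time drift there, which a priori can be anything in $[0,(k-1)m]$. The paper does this by encoding the symbol $y$ as a single block $(y,2T+3m)$ and the symbol $x$ as two blocks $(x,T+m),(x,T+m)$ of comparable total length. Then either the drift exceeds $T$ at some index \emph{before} the first symbol difference, where $d(f^r(a),a)\ge\ep$ with $r\in[T,T+m)$ applies and one of the two compared points sits at a window start (hence within $\ep_2$ of a base point); or the drift is still $<T$ at the first symbol difference, where the one-block-versus-two-blocks structure forces a comparison of the form $d(f^r(x),y)$ with $r\ge0$, or $d(f^{r_1}(y),x)$ with $r_1>T+m$, both covered by Lemma \ref{stayawaysf}. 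In every case exactly one of the two compared points is within $\ep_2$ of a base point $x$ or $y$ --- the structural feature that your evaluation at $s'=s_{k+1}(\xi)+T$ gives up.
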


\begin{proof}
Let $x,y\in X$, $\ep>0$ and $T>0$ be as in Lemma \ref{stayawaysf}.
Let $0<\ep_2<\frac13\ep$ and $m=M(\ep_2)$.
Let
$$Q_1=\{(y,2T+3m)\}\text{ and }Q_2=\{(x,T+m),(x,T+m)\}.$$

For each 
$\xi=\{\omega_k(\xi)\}_{k=1}^n\in\{1,2\}^n$, consider
$$\sC_\xi=\{Q_{\omega_k(\xi)}:k=1,\cdots, n\}=\{(x_j(\xi),m_j(\xi)):j=1,\cdots,n(\xi)\},$$
where
$$n(\xi)=\sum_{k=1}^n\omega_k(\xi).$$
There is $\sg_\xi=\{t_j(\xi):j=1,\cdots,n(\xi)-1\}$ 
with $\max\sg_\xi\le m$ such
that $(\sC,\sg)$ is $\ep_2$-shadowed by $z_\xi\in X$.
For each $\xi$, denote
$$s_1(\xi)=0\text{ and }s_j(\xi)=\sum_{i=1}^{j-1}(m_i(\xi)+t_i(\xi))\text{
for }j=2,\cdots,n(\xi).$$
Then
$$s_{n(\xi)}(\xi)\le(2T+4m)n\text{ for every }\xi\in\{1,2\}^n$$

We claim that if $\xi\ne\xi'$ then there is $s\le(2T+4m)n$ such that
$$d(f^s(z_\xi),f^s(z_{\xi'}))>\ep_2.$$

Assume that $x_j(\xi)=x_j(\xi')$ for $j=1,\cdots,l-1$,
$x_l(\xi)=y$ and  $x_l(\xi')=x$. 
Our discussion can be split into the following
cases.
\begin{enumerate}[{Case} 1.]
\item $l=1$. Then
$$d(z_\xi,z_{\xi'})\ge d(x,y)-d(z_\xi,x)-d(z_{\xi'},y)>\ep_2.$$
\item $l\ge2$ and there is $k<l$ with $|s_{k}(\xi)-s_{k}(\xi')|\ge T$.
Let $k$ be the smallest index satisfying the inequality. Then
$$|s_{k-1}(\xi)-s_{k-1}(\xi')|< T.$$
Then
\begin{align*}
r:={}&|s_{k}(\xi)-s_{k}(\xi')|\\
={}&|(s_{k-1}(\xi)+m_{k-1}(\xi)+t_{k-1}(\xi))-(s_{k-1}(\xi')+m_{k-1}(\xi')+t_{k-1}(\xi'))|\\
={}&|(s_{k-1}(\xi)+t_{k-1}(\xi))-(s_{k-1}(\xi')+t_{k-1}(\xi'))|\\
\le{}&|(s_{k-1}(\xi)-s_{k-1}(\xi')|+|t_{k-1}(\xi)-t_{k-1}(\xi'))|\\
<{} &T+m.
\end{align*}
Assume that $s_k(\xi)<s_k(\xi')$. Then
\begin{align*}
&d(f^{s_k(\xi')}(z_\xi), f^{s_k(\xi')}(z_{\xi'}))\\
\ge{} &d(f^r(x_k(\xi)),x_k(\xi'))-
d(f^{s_k(\xi)+r}(z_\xi),f^r(x_k(\xi)))-
d(f^{s_k(\xi')}(z_{\xi'}),x_k(\xi'))\\
>{}&\ep_2.
\end{align*}
\item $l\ge2$ and $|s_{l-1}(\xi)-s_{l-1}(\xi')|< T$.
A similar argument shows that
$$r:=|s_l(\xi)-s_l(\xi')|<T+m.$$
If $s_l(\xi)\ge s_l(\xi')$, then
\begin{align*}
&d(f^{s_l(\xi)}(z_\xi), f^{s_l(\xi)}(z_{\xi'}))\\
\ge{} &d(y,f^r(x))-
d(f^{s_l(\xi)}(z_\xi),y)-
d(f^{s_l(\xi')+r}(z_{\xi'}),f^r(x))\\
>{}&\ep_2.
\end{align*}
If $s_l(\xi)<s_l(\xi')$, then
$$r_1:=s_{l+1}(\xi')-s_l(\xi)=r+(T+m)+t_l(\xi')\in(T+m,2T+3m)$$
Note that $x_{l+1}(\xi')=x$. We have
\begin{align*}
&d(f^{s_{l+1}(\xi')}(z_\xi), f^{s_{l+1}(\xi')}(z_{\xi'}))\\
\ge{} &d(f^{r_1}(y),x)-
d(f^{s_{l}(\xi)+r_1}(z_\xi),f^{r_1}(y))-
d(f^{s_{l+1}(\xi')}(z_{\xi'}),x)\\
>{}&\ep_2.
\end{align*}
\end{enumerate}
Above argument shows that 
$$E=\{z_\xi: \xi\in\{1,2\}^n\}$$
is a $((2T+4m)n,\ep_2)$-separated subset of $X$
that contains $2^n$ points.
Hence
$$h(f)\ge\limsup_{n\to\infty}\frac{\ln s((2T+4m)n,\ep_2)}{(2T+4m)n}\ge
\limsup_{n\to\infty}\frac{n\ln
2}{(2T+4m)n}=\frac{\ln2}{2T+4m}>0.$$

\end{proof}

\section*{Acknowledgments}
The author is supported by NSFC No. 11571387. 
 


\end{document}